\newtheorem{theorem}{Theorem}
\newtheorem{lemma}{Lemma}
\newtheorem{corollary}{Corollary}
\newtheorem{proposition}{Proposition}
\newtheorem{claim}{Claim}
\theoremstyle{definition}
\newtheorem{definition}{Definition}
\newtheorem*{question}{Question}
\theoremstyle{remark}
\newtheorem*{remark}{Remark}
\newcommand{\norm}[1]{\lVert#1\rVert}
\newcommand{\abs}[1]{\lvert#1\rvert}
\newcommand{\eps}{\varepsilon}
\newcommand{\Bplus}{{\mathcal{B}^+}}
\newcommand{\aeto}{\xrightarrow{\mathrm{a.\:e.}}}
\newcommand{\xto}{\xrightarrow}
\newcommand{\xnto}{\xnrightarrow}
\newcommand{\ONE}{{\mathbf{1}}}
\newcommand\GG{{\mathbf{G}}}
\newcommand{\Edge}[1]{\mathcal{E}(#1)}
\newcommand{\Vertex}[1]{\mathcal{V}(#1)}
\newcommand{\Paths}[1]{\mathcal{P}(#1)}
\newcommand\Pop{\mathsf{P}}
\newcommand\Moper{M}
\newcommand\Mnum{{M^\circ}}
\newcommand\Tgr{T}
\def\ext@notarrow#1#2#3#4#5#6#7{%
  \mathrel{\mathop{%
    \setbox\z@\hbox{#5\displaystyle}%
    \setbox\tw@\vbox{\m@th
      \hbox{$\scriptstyle\mkern#3mu{#6}\mkern#4mu$}%
      \hbox{$\scriptstyle\mkern#3mu{#7}\mkern#4mu$}%
      \copy\z@
    }%
    \hbox to\wd\tw@{\rlap{\hbox to\wd\tw@{\hfil\raisebox{0.3ex}{$\mathbf{\scriptscriptstyle/}$}\hfil}}\unhbox\z@}}%
  \limits
    \@ifnotempty{#7}{^{\if0#1\else\mkern#1mu\fi
                       #7\if0#2\else\mkern#2mu\fi}}%
    \@ifnotempty{#6}{_{\if0#1\else\mkern#1mu\fi
                       #6\if0#2\else\mkern#2mu\fi}}}%
}
\newcommand{\xnrightarrow}[2][]{\ext@notarrow 0359\rightarrowfill@{#1}{#2}}
\newcommand{\xnleftarrow}[2][]{\ext@notarrow 3095\leftarrowfill@{#1}{#2}}
\begin{document}
\title{Ces\`aro convergence of spherical averages\\
for measure-preserving actions\\
of Markov semigroups and groups}
\author{Alexander Bufetov\thanks{Steklov Mathematical Institute, Moscow, Russia, and
Rice University, Houston, Texas,~USA.},
Mikhail Khristoforov\thanks{Chebyshev Laboratory, Department of Mathematics and Mechanics,
Saint-Petersburg State University, Saint-Petersburg, Russia.},
Alexey Klimenko\thanks{Steklov Mathematical Institute, Moscow, Russia.}}
\date{}
\maketitle

\begin{abstract}
Ces\`aro convergence of spherical averages is proven for
measure-preserving actions of Markov semigroups and groups.
Convergence in the mean is established for functions in $L^p$, $1\le p<\infty$,
and pointwise convergence for functions in $L^\infty$. In
particular, for measure-preserving actions of word hyperbolic groups
(in the sense of Gromov)
we obtain Ces\`aro convergence of spherical averages with respect to
any symmetric set of generators.
\end{abstract}

\section{Introduction}

\subsection{Formulation of the main results}

Let $\Gamma$ be a finitely generated semigroup. Choice of a finite set
of generators~$O$ endows~$\Gamma$ with a norm $\abs{\,\cdot\,}_O$:
for $g\in\Gamma$ the number $\abs{g}_O$ is the length of~the~shortest word
over the alphabet~$O$ representing~$g$. Denote $S_O(n)=\{g:\nobreak\abs{g}_O=n\}$.

Assume that the semigroup~$\Gamma$ acts on a probability space~$(X,\nu)$
by~measure-preserving transformations, and for $g\in\Gamma$ let $T_g$ be the
corresponding map. Now take $\varphi\in L^1(X,\nu)$ and consider the sequence of
its \emph{spherical averages}
\begin{equation*}
s_n(\varphi)=\frac1{\#S_O(n)}\sum_{g\in S_O(n)} \varphi\circ T_g
\end{equation*}
(here and everywhere $\#$ stands for the cardinality of a finite set;
if $S_O(n)=\varnothing$, then we set $s_n(\varphi)=0$).
Next, consider the Ces\`aro averages of the spherical averages:
\begin{equation*}
c_N(\varphi)=\frac1N\sum_{n=0}^{N-1}s_n(\varphi).
\end{equation*}
The main result of this paper establishes mean convergence of the
averages~$c_N(\varphi)$ for $\varphi\in L^1(X,\nu)$ and pointwise convergence
of $c_N(\varphi)$
for~$\varphi\in\nobreak L^\infty(X,\nu)$ in the case when $\Gamma$ is
a \emph{Markov semigroup} with respect to the~generating set~$O$.

Recall the definition of Markov semigroups. As before, let~$\Gamma$ be
a semigroup with a finite generating set~$O$. For a finite directed graph~$\GG$
with the~set of arcs~$\Edge{\GG}$, a labelling on~$\GG$ is a map
$\xi\colon\Edge{\GG}\to O$. Let $v_0$ be a vertex of~$\GG$ and let $\Paths{\GG,v_0}$ be
the set of all finite paths in~$\GG$ starting at~$v_0$. To~each path
$p=e_1\dots e_n\in\Paths{\GG,v_0}$ we assign an element~$\xi(p)\in\Gamma$ by
the~formula
\begin{equation*}
\xi(p)=\xi(e_1)\dots\xi(e_n).
\end{equation*}
The semigroup~$\Gamma$ is called \emph{Markov} with respect to a finite
generating set~$O$ if there exists a finite directed graph~$\GG$, a vertex~$v_0$
of~$\GG$, and a labelling $\xi\colon\Edge{\GG}\to O$ such that the lifted map
$\xi\colon\Paths{\GG,v_0}\to\Gamma$ is a bijection, and, furthermore,
for a path $p\in\Paths{\GG,v_0}$ of length~$n$ we have $\abs{\xi(p)}_O=n$.

For example, a theorem by Gromov \cite{Gromov} states that a word
hyperbolic group is Markov with respect to any symmetric set of generators
(for cocompact groups of isometries of Lobachevsky spaces, the Markov property
had been established earlier by Cannon \cite{Cannon}; a detailed exposition
of the proof of Gromov's theorem can be found in the book of Ghys and
de~la~Harpe~\cite{GhysDelaharpe}).\looseness=1

We are now ready to formulate the main result of the paper.

\begin{theorem}\label{Thm:MarkSemi}Let~$\Gamma$ be a Markov semigroup
with respect
to a finite generating
set~$O$. Assume that $\Gamma$ acts by measure-preserving transformations
on a probability space~$(X,\nu)$. Then for any~$p$, $1\le p<\infty$, and any $\varphi\in L^p(X,\nu)$
the sequence of Ces\`aro averages of its spherical averages
\begin{equation*}
c_N(\varphi)=\frac1N\sum_{n=0}^{N-1}\frac1{\#S_O(n)}\sum_{g\in S_O(n)}\varphi\circ T_g
\end{equation*}
converges in~$L^p(X,\nu)$ as~$N\to\nobreak\infty$. If, additionally,
$\varphi\in L^\infty(X,\nu)$, then the~sequence~$c_N(\varphi)$
converges $\nu$-almost everywhere as $N\to\infty$.\end{theorem}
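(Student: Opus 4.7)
The plan is to exploit the Markov structure to encode the spherical averages via a block-matrix operator on a direct sum of copies of $L^p(X,\nu)$, and then to reduce the theorem to standard mean and pointwise ergodic theorems for this single operator.

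For each vertex $v\in\Vertex{\GG}$, let $P_n(v)\subset\Paths{\GG,v_0}$ be the set of length-$n$ paths ending at $v$, and set $\tilde s_n^v(\varphi)=\sum_{p\in P_n(v)}\varphi\circ T_{\xi(p)}$. The Markov bijection yields $\#S_O(n)=\sum_v\#P_n(v)$ and $\#S_O(n)\,s_n(\varphi)=\sum_v\tilde s_n^v(\varphi)$. Write $\Pop_o\varphi=\varphi\circ T_o$ for the Koopman operator; it is a positive isometry of every $L^p(X,\nu)$. The identity $\xi(p\cdot e)=\xi(p)\xi(e)$ for an arc $e\colon u\to v$ gives the recursion
\[
\tilde s_{n+1}^v=\sum_u\sum_{e\colon u\to v}\Pop_{\xi(e)}\,\tilde s_n^u,
\]
equivalently $\mathbf f_n=\Moper^n\mathbf f_0$, where $\Moper$ is a $\Vertex{\GG}$-indexed block-matrix operator acting on $\mathcal{H}_p:=\bigoplus_{v\in\Vertex{\GG}}L^p(X,\nu)$ and $\mathbf f_0$ takes value $\varphi$ at $v_0$ and zero elsewhere. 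Forgetting labels turns $\Moper$ into the scalar adjacency matrix $\Mnum$ of $\GG$, and $\#S_O(n)=\sum_v(\Mnum^n)_{v,v_0}$.

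\textbf{Mean convergence.} Apply Perron--Frobenius componentwise to the strongly connected components of $\GG$ to extract the dominant eigenvalue $r=\lim_n\#S_O(n)^{1/n}$ together with the Ces\`aro asymptotics of $\#S_O(n)/r^n$. Equip $\mathcal{H}_p$ with the weighted norm induced by the Perron left-eigenvector of $\Mnum$, so that $\Moper/r$ becomes power-bounded. The mean ergodic theorem on the reflexive $\mathcal{H}_p$ ($1<p<\infty$), extended to $p=1$ by density using uniform $L^1$-boundedness of $\Moper^n/r^n$, gives strong convergence of $\frac1N\sum_{n<N}r^{-n}\Moper^n\mathbf f_0$. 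Combining with the scalar Ces\`aro asymptotics of $\#S_O(n)/r^n$ then yields $L^p$-convergence of $c_N(\varphi)$.

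\textbf{Pointwise convergence and main obstacle.} Each $\Pop_o$ is a positive $L^1$--$L^\infty$ contraction, so after the rescaling above $\Moper/r$ realises as a Markov operator on the extended space $X\times\Vertex{\GG}$ equipped with the product of $\nu$ and the Perron stationary measure on $\Vertex{\GG}$. The Dunford--Schwartz pointwise ergodic theorem then supplies both a maximal inequality and a.e.\ convergence of its Ces\`aro averages for bounded functions; together with the $L^p$ statement and the Banach principle this upgrades to $\nu$-a.e.\ convergence of $c_N(\varphi)$. The chief obstacle lies in the spectral step: selecting the weighting that makes $\Moper/r$ power-bounded, and, more delicately, ruling out non-trivial Jordan blocks of $\Moper$ at peripheral eigenvalues. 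The absence of such blocks is automatic for the numerical matrix $\Mnum$ by Perron--Frobenius, but must be lifted to the operator level using the isometric and positivity properties of the $\Pop_o$; this is where the technical heart of the argument sits.
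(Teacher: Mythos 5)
Your encoding of the spherical averages by the operator-valued matrix $\Moper$ acting on $\bigoplus_{v}L^p(X,\nu)$ is exactly the paper's setup, but the reduction to the mean ergodic theorem and Dunford--Schwartz breaks down at the point you yourself flag, and in a way that cannot be repaired: for a \emph{reducible} nonnegative matrix, Perron--Frobenius does \emph{not} rule out nontrivial Jordan blocks at the spectral radius. The matrix $\bigl(\begin{smallmatrix}1&1\\0&1\end{smallmatrix}\bigr)$ (a loop at each of two vertices plus one connecting arc) already has a Jordan block at $r=1$; for the corresponding labelled graph one gets $\#S_O(n)=n+1$ and $s_n(\varphi)=\frac1{n+1}\sum_{k+m=n}T_a^kT_b^m\varphi$, so $\Moper^n/r^n$ has norm of order $n$ and is not power-bounded under any weighting. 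More generally $\#S_O(n)\sim a\,n^b r^n$ along arithmetic progressions with $b\ge1$ whenever two components of maximal spectral radius are chained, and this is precisely the situation the paper cannot exclude for hyperbolic groups --- dropping the irreducibility assumption is its entire point. In that regime your two remaining steps also fail independently of the Jordan-block issue: the mean ergodic theorem would control $\frac1N\sum_n r^{-n}\Moper^n\mathbf f_0$, whereas $c_N(\varphi)$ is the Ces\`aro average of $\Moper^n\mathbf f_0/\#S_O(n)\approx r^{-n}\Moper^n\mathbf f_0/(an^b)$, and for $b\ge1$ these two averages weight the terms incomparably (the former is dominated by the indices $n$ near $N$, the latter is not); moreover $\Moper/r$ is then not an $L^1$--$L^\infty$ contraction on the extended space, so Dunford--Schwartz does not apply.

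The paper's actual route is built around exactly this obstruction. It argues by induction on the number of vertices: either $\GG$ is strongly connected --- then the result is the theorem of \cite{Buf01}, where your spectral picture is correct --- or $\Vertex{\GG}=V_1\sqcup V_2$ with no arcs from $V_2$ to $V_1$, and then $(\Moper(\GG)^n)_{u,v}$ for $u\in V_1$, $v\in V_2$ is a \emph{convolution} $\sum_{k+m=n-1}(\Moper(\GG_1)^k)\,\Moper(\GG)\,(\Moper(\GG_2)^m)$ of the two sub-blocks. The technical heart (Lemma~\ref{Lem:OperPreConv}, item~5, and Proposition~\ref{Prop:Conv}) is to show that Ces\`aro convergence of the individually normalised operators, together with the $a\,n^b c^n$ asymptotics of the normalising constants, is stable under convolution; in the critical case of equal exponential rates this requires a genuine second-order summation-by-parts argument plus separate pointwise estimates for the a.e.\ statement (which is why the pointwise result is only claimed for $\varphi\in L^\infty$ rather than $L^1$). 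None of this is subsumed by the mean ergodic theorem for a single power-bounded operator, so the proposal is missing the main idea of the proof.
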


\begin{corollary}\label{Cor:GromovGr}Let~$\Gamma$ be an infinite word
hyperbolic group \textup(in the sense of~Gromov\textup), and let $O$ be a finite symmetric generating set for~$\Gamma$.
Assume that $\Gamma$ acts by measure-preserving transformations
on a probability space~$(X,\nu)$. Then for any~$p$, $1\le\nobreak p<\infty$, and any $\varphi\in L^p(X,\nu)$
the sequence of Ces\`aro averages of its spherical averages
\begin{equation*}
c_N(\varphi)=\frac1N\sum_{n=0}^{N-1}\frac1{\#S_O(n)}\sum_{g\in S_O(n)}\varphi\circ T_g
\end{equation*}
converges in $L^p(X,\nu)$ as $N\to\infty$. If, additionally,
$\varphi\in L^\infty(X,\nu)$, then the~sequence $c_N(\varphi)$
converges $\nu$-almost everywhere as $N\to\infty$.
\end{corollary}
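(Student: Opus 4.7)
The plan is to derive Corollary~\ref{Cor:GromovGr} as a direct reduction to Theorem~\ref{Thm:MarkSemi}, with Gromov's theorem on the Markov property of word hyperbolic groups serving as the bridge between the two settings. Since $O$ is symmetric, every element of $\Gamma$ can be written as a product of elements of $O$, so $\Gamma$ is generated as a semigroup by~$O$, and the group-theoretic word length on $\Gamma$ coincides with the semigroup word norm $\abs{\,\cdot\,}_O$ defined in the introduction. By Gromov's theorem (cited in the paragraph preceding Theorem~\ref{Thm:MarkSemi}), $\Gamma$ is therefore a Markov semigroup with respect to~$O$.

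Next, I would observe that the spheres $S_O(n)$, the spherical averages $s_n(\varphi)$, and their Ces\`aro averages $c_N(\varphi)$ depend only on the word norm $\abs{\,\cdot\,}_O$ and on the family of operators $T_g$. These data are unchanged when one regards $\Gamma$ as a semigroup generated by~$O$ rather than as a group, so any measure-preserving action of~$\Gamma$ as a group restricts to a measure-preserving action of~$\Gamma$ as a semigroup. Applying Theorem~\ref{Thm:MarkSemi} to this semigroup action then yields convergence of $c_N(\varphi)$ in $L^p(X,\nu)$ for $\varphi \in L^p(X,\nu)$ with $1 \le p < \infty$, and $\nu$-almost everywhere convergence for $\varphi \in L^\infty(X,\nu)$, which is exactly the conclusion of the corollary.

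Because the deduction is purely formal, there is no genuine obstacle internal to this proof: all the ergodic-theoretic content has been absorbed into Theorem~\ref{Thm:MarkSemi}, and the Markov property of hyperbolic groups is supplied by Gromov's theorem. The only point to verify is the semigroup-versus-group compatibility addressed above, and this is immediate from the symmetry of~$O$.
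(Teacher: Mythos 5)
Your proposal is correct and is exactly the paper's (implicit) argument: the corollary is deduced by invoking Gromov's theorem that a word hyperbolic group is Markov with respect to any symmetric generating set and then applying Theorem~\ref{Thm:MarkSemi}. Your remark on the semigroup-versus-group compatibility is the right observation and matches the intended reduction.
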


Under additional assumption of exponential mixing of the action,
pointwise Ces\`aro convergence for spherical averages of functions from~$L^2$
for~measure-preserving actions of word hyperbolic groups
was obtained by Fujiwara and Nevo \cite{FuNe}.
L.~Bowen \cite{Bowen} proved convergence of spherical averages for actions
of word hyperbolic groups on finite spaces. Both Fujiwara and Nevo \cite{FuNe}
and L.~Bowen~\cite{Bowen} also proved that in their setting the limit
is invariant under the action.

Our result applies to all measure-preserving actions
of all finitely-generated infinite word hyperbolic groups. Our argument,
however, does not give any information about the limit.

\begin{question}In Theorem~\ref{Thm:MarkSemi},
when is it true that the limit is $\Gamma$-invariant?\end{question}

We conjecture that it always is in Corollary~\ref{Cor:GromovGr}.

\subsection{History}

First ergodic theorems for measure-preserving actions of arbitrary
countable groups were obtained by Oseledets in 1965 \cite{Oseled}. Oseledets
endows a countable group~$\Gamma$ with a probability distribution $\mu$
satisfying $\mu(g)=\mu(g^{-1})$, $g\in\Gamma$, and establishes pointwise
convergence of the sequence of operators
\begin{equation*}
S^{(\mu)}_{2n}=\sum_{g\in\Gamma}\mu^{(2n)}(g)T_g
\end{equation*}
as $n\to\infty$ (here $\mu^{(k)}$ stands for the $k$-th convolution of
the measure~$\mu$). To~prove  pointwise convergence Oseledets
uses the martingale theorem in~the~space of trajectories of the Markov
chain corresponding to the self-adjoint Markov operator $S_1^{(\mu)}$;
the argument of Oseledets is thus a precursor, in~the~self-adjoint case,
of Rota's ``Alternierende Verfahren'' argument \cite{Rota}.

For uniform spherical averages corresponding to measure-preserving actions
of free groups convergence in the mean was established by Y. Guivarc'h \cite{Guivarch},
who used earlier work of Arnold and Krylov \cite{ArnKr} on equidistribution
of~two rotations of the sphere.

In 1986, R.I. Grigorchuk \cite{Grig86} (see also \cite{Grig99}, \cite{Grig00})
obtained pointwise convergence of Ces\`aro averages of uniform spherical
averages of $L^1$-functions for~measure-preserving actions of free groups.
The limit is invariant under the~action of the group.

For functions in $L^2$, pointwise convergence of uniform spherical averages
themselves was established in 1994 by Nevo \cite{Nevo94}, and for functions
in $L^p$, $p>1$, by Nevo and Stein \cite{NeSt}. The limit was proven to be
invariant under the subgroup of elements of even length.
Whether convergence of uniform
spherical averages holds for functions in $L^1$ remains an open problem
(recall that, as Ornstein showed \cite{Ornst}, powers of a self-adjoint
Markov operator applied to a function in $L^1$ need not converge almost surely).

In \cite{Buf02}, pointwise convergence of uniform spherical averages
is obtained by applying Rota's ``Alternierende Verfahren'' Theorem to a
special Markov operator assigned to the action. This approach also yields
pointwise convergence of \emph{non-uniform} spherical averages corresponding
to Markovian weights satisfying a symmetry condition \cite{Buf02}.

Convergence of Ces\`aro averages on non-uniform spherical averages
for~actions of free groups and free semigroups holds for general Markovian
(and, in fact, for general stationary) weights \cite{Buf99}, \cite{Buf00},
\cite{Buf01}. The motivation behind considering such Markovian weights is
precisely to establish
ergodic theorems for actions of Markov groups, in particular, of word
hyperbolic groups.

The results of \cite{Buf01}, however, can only be applied to groups
that are coded by admissible words in an \emph{irreducible} Markov chain;
in fact, to prove invariance of the limit function, even a stronger condition
is needed, which is called \emph{strict irreducibility} in \cite{Buf01}
and is equivalent to the triviality of the symmetric $\sigma$-algebra of the
corresponding Markov chain with finitely many states.

For some groups, a Markov coding is known explicitly: for instance, for
Fuchsian groups such a coding has been constructed by Series~\cite{Series}.
The Series coding does in fact have the strict irreducibility property, and
pointwise convergence of Ces\`aro averages of uniform spherical averages
for~measure-preserving actions of Fuchsian groups and for functions
in~$L^1$ is established in \cite{BufSer}, extending the earlier theorem of
Fujiwara and Nevo \cite{FuNe} for functions in~$L^2$.\looseness=-1

For general word hyperbolic groups, however, it is not clear whether
the~Markov coding is irreducible.
The main result of this paper is that convergence of Ces\`aro averages of
spherical averages still holds without the irreducibility assumption.

\subsection{Acknowledgements}

We are deeply grateful to R.I.~Grigorchuk, V.A.~Kaimanovich, A.~Nevo,
and A.M.~Vershik for useful discussions.

A.\,B. is an Alfred P. Sloan Research Fellow.
He is supported in part by~grant MK-4893.2010.1 of
the President of the Russian Federation,
by the Programme on Mathematical Control Theory of the Presidium of the Russian Academy of Sciences,
by the Programme 2.1.1/5328 of the Russian Ministry of Education and Research,
by the Edgar Odell Lovett Fund at Rice University,
by the NSF under grant DMS 0604386,
and by the RFBR-CNRS grant~\mbox{10-01-93115}.

M.\,K. is supported in part by the Chebyshev Laboratory (Department of~Mathematics
and Mechanics, Saint-Petersburg State University) under the grant
11.G34.31.2006 of the Government of the Russian Federation.

A.\,K. is partially supported by RFBR grant 08-01-00342a,
by grants \mbox{NSh-8508.2010.1} and MK-4893.2010.1 of the President
of the Russian Federation, and by the Programme on Mathematical Control Theory
of the Presidium of the Russian Academy of Sciences.

\section{Paths and operators}

Let us introduce some notation regarding a directed graph from the
definition of Markov groups.
Consider a finite directed graph $\GG$ (loops and multiple edges are permitted).
The sets of vertices and edges (arcs) of~$\GG$ are denoted as~$\Vertex{\GG}$
and $\Edge{\GG}$ respectively. For an edge $e$, $I(e)$ and $F(e)$ are its
initial (tail) and terminal (head)
vertices. Denote
\begin{equation*}
\Edge{\GG,u,v}=\{e\in\Edge{\GG}\mid I(e)=u, F(e)=v\}.
\end{equation*}
Then, let $\Paths{\GG}$ be the set of finite paths in $\GG$, that is,
\begin{equation*}
\Paths{\GG}=\{l=e_1e_2\ldots e_k\mid I(e_j)=F(e_{j-1})\}.
\end{equation*}
Denote by $\abs{l}$ the length of a path~$l$.

Let $(X,\nu)$ be a probability space. Assume that to every arc~$e\in\Edge{\GG}$
a~measure-preserving transformation $\Tgr_e$ of $(X,\nu)$ is assigned.
In this case we~say that $\GG$ is \emph{labelled} by measure-preserving transformations
of $(X,\nu)$.

The map $e\mapsto\Tgr_e$ is naturally extended onto $\Paths{\GG}$ by
formula
\begin{equation*}
\Tgr_{e_1\dots e_k}=\Tgr_{e_1}\dots\Tgr_{e_k}.
\end{equation*}
The action of $\Tgr_l$, $l\in\Paths{\GG}$,
induces a standard action on the space $L^p(X,\nu)$: $\Tgr_l(\varphi)=\varphi\circ\Tgr_l$.
For any finite subset $L\subset\Paths{\GG}$ introduce an operator $s(L)$ on
$L^p(X,\nu)$ acting by the formula
\begin{equation*}
s(L)=\frac1{\#L}\sum_{l\in L} \Tgr_l
\end{equation*}
if $L\ne\varnothing$; we set $s(\varnothing)=0$.

In particular, denote
\begin{equation*}
L^\GG_{u,v,n}=\{l\in\Paths{\GG}\mid I(l)=u, F(l)=v, |l|=n\},
\end{equation*}
that is, $L^\GG_{u,v,n}$ is the set of all paths from~$u$ to~$v$ of length~$n$.
Define $s^\GG_{u,v,n}=s(L^\GG_{u,v,n})$ and
let $c^\GG_{u,v,N}$ be their Ces\`aro averages:
\begin{equation*}
c^\GG_{u,v,N}=\frac1N\sum_{n=0}^{N-1} s^\GG_{u,v,n}.
\end{equation*}
Analogously, denote $L^\GG_{u,{*},n}=\bigcup_{v\in\Vertex{\GG}} L^\GG_{u,v,n}$
and define
\begin{equation*}
s^\GG_{u,{*},n}=s(L^\GG_{u,{*},v}),\quad c^\GG_{u,{*},N}=\frac1N\sum_{n=0}^{N-1}s^\GG_{u,{*},n}.
\end{equation*}

\begin{theorem}\label{Thm:UVU*Avg}Let $\GG$ be a finite directed graph labelled
by measure-preserving transformations of a probability space $(X,\nu)$.
Then for operators $c^\GG_{u,v,N}$ and $c^\GG_{v_0,*,N}$ defined above,
the following statements hold.\\
\textup{1a.} For any $\varphi\in L^p(X,\nu)$, $p\in [1,\infty)$,
the sequence $\{c^\GG_{u,v,N}(\varphi)\}_{N=1}^\infty$
converges in $L^p(X,\nu)$.\\
\textup{1b.} For any $\varphi\in L^\infty(X,\nu)$ the sequence
$\{c^\GG_{u,v,N}(\varphi)\}_{N=1}^\infty$ converges $\nu$-almost everywhere.\\
\textup{2a.} For any $\varphi\in L^p(X,\nu)$, $p\in [1,\infty)$,
the sequence $\{c^\GG_{v_0,{*},N}(\varphi)\}_{N=1}^\infty$
converges in $L^p(X,\nu)$.\\
\textup{2b.} For any $\varphi\in L^\infty(X,\nu)$ the sequence
$\{c^\GG_{v_0,{*},N}(\varphi)\}_{N=1}^\infty$ converges $\nu$-almost everywhere.
\end{theorem}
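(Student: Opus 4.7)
The strategy is to decompose $\GG$ into its strongly connected components (SCCs) and reduce to the strongly connected case, which can be treated by classical Perron--Frobenius and Markov-chain techniques in the spirit of \cite{Buf01}. Let $C_0, \ldots, C_r$ be the SCCs; the condensation $\tilde\GG$ is a finite directed acyclic graph. Every path $l \in \Paths{\GG}$ visits a uniquely determined ordered sequence of SCCs, connected by specific transition arcs. Since there are only finitely many such sequences and finitely many choices of transition arcs, the paths from $u$ to $v$ split into finitely many \emph{types}; it suffices to prove Ces\`aro convergence for the operator sums restricted to one fixed type, then sum over types.

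Fix a type $\tau = (C_{i_0}, f_1, C_{i_1}, \ldots, f_k, C_{i_k})$. A path of type $\tau$ and length $n$ is a tuple $(p_0, f_1, p_1, \ldots, f_k, p_k)$ with each $p_j$ a path in $C_{i_j}$ between prescribed endpoints and $\abs{p_0} + \cdots + \abs{p_k} = n - k$. Writing $\Sigma^{C}_{x,y,m} = \sum_p \Tgr_p$ for the unnormalized operator sum over length-$m$ paths in $C$ from $x$ to $y$, the contribution of type $\tau$ to $\sum_{l \in L^\GG_{u,v,n}} \Tgr_l$ equals the discrete convolution
\begin{equation*}
\sum_{m_0 + \cdots + m_k = n-k} \Sigma^{C_{i_0}}_{u, I(f_1), m_0}\, \Tgr_{f_1}\, \Sigma^{C_{i_1}}_{F(f_1), I(f_2), m_1}\, \Tgr_{f_2}\, \cdots\, \Tgr_{f_k}\, \Sigma^{C_{i_k}}_{F(f_k), v, m_k}.
\end{equation*}
Each $C_{i_j}$ is strongly connected, so Perron--Frobenius provides sharp asymptotics for $\#L^{C_{i_j}}_{x,y,m}$ along arithmetic progressions, and the strongly connected (irreducible) case yields Ces\`aro convergence for the normalized averages $s^{C_{i_j}}_{x,y,m}$.

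The main obstacle is combining these ingredients through the global normalization by $\#L^\GG_{u,v,n}$. Writing $s^\GG_{u,v,n} = \sum_\tau w_\tau(n)\, s^\tau_n$ with $w_\tau(n) = \#L^\tau_n / \#L^\GG_{u,v,n}$, I would identify the dominant types (those maximizing the product of Perron eigenvalues together with a polynomial correction counting the number of ``active'' SCCs along $\tau$), show that non-dominant types contribute $o(1)$ uniformly, and establish a Ces\`aro--convolution lemma ensuring that Ces\`aro convergence is preserved under convolution with matching Perron weights; this lemma I would prove by induction on $k$, with the base case $k=0$ supplied by the irreducible case. For the $\nu$-a.e.\ statements in parts 1b and 2b I would follow the approach of \cite{Buf02}, constructing an auxiliary reversible Markov operator associated to the labelling and invoking Rota's ``Alternierende Verfahren'' \cite{Rota}. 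The $L^p$ convergence in parts 1a and 2a then follows from the $L^\infty$ case by density together with the contraction bound $\norm{c^\GG_{u,v,N}}_{L^p\to L^p} \le 1$. Finally, part 2 follows from part 1 by summing over terminal vertices $v$ with the weights $\#L^\GG_{v_0, v, n}/\#L^\GG_{v_0, *, n}$, which are themselves Ces\`aro stabilized by Perron--Frobenius applied to $\tilde\GG$.
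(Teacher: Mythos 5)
Your combinatorial skeleton is essentially the paper's: classifying paths by the sequence of strongly connected pieces they visit and writing each class as a convolution of irreducible blocks is the same decomposition the paper performs (inductively, two blocks at a time, via the splitting $\Vertex{\GG'}=V_1\sqcup V_2$ with no arcs from $V_2$ to $V_1$), and your ``dominant type'' analysis corresponds to the paper's regularity bookkeeping with the asymptotics $a n^b c^n$. But there are two genuine gaps. The more serious one is your treatment of the pointwise statements 1b and 2b. You propose to get them by building ``an auxiliary reversible Markov operator associated to the labelling'' and invoking Rota's theorem as in \cite{Buf02}. No such reversible operator exists in this setting: the graph is labelled by arbitrary measure-preserving transformations, with no involution on the edge set, no inverses (this is a semigroup statement), and no symmetry condition on the induced Markov chain. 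The Rota/``Alternierende Verfahren'' route requires exactly the self-adjointness that is absent here, and the history section of the paper makes clear that avoiding this hypothesis (and the irreducibility hypothesis) is the point of the Ces\`aro-averaged formulation. Since your $L^p$ statements are deduced from the $L^\infty$ pointwise statement by density, this gap propagates to parts 1a and 2a as well.

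The second gap is that the ``Ces\`aro--convolution lemma'' you invoke is not a routine reduction to the irreducible case: it is the technical heart of the problem. Ces\`aro convergence of the normalized averages of $\{F_n\}$ and $\{G_n\}$ does not formally yield Ces\`aro convergence for $\{\sum_{k+m=n}F_kG_m\}$, because the normalizations do not factor and one is left with a genuinely two-dimensional cross term $\sum_{k+m=n}X_k^*Y_m^*$ (where $X_k^*$, $Y_m^*$ are the centered operators). Handling it requires a double summation by parts with second-difference (mean value theorem) estimates on the weights, and --- for the a.e.\ statement --- control of $\sup_{k+m=n}\abs{(A_kB_m\varphi)(x)}$ outside explicit exceptional null sets; this occupies most of the paper (Lemma~\ref{Lem:OperPreConv}, item~\ref{Item:Convolution}, Proposition~\ref{Prop:Conv}, and Claims~\ref{Clm:Mnto0}--\ref{Clm:Wk}). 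Your induction on the number $k$ of transition arcs is the right organizing principle (it is exactly what the paper's induction achieves), but the inductive step is precisely this unproven convolution lemma, so as written the proposal defers the entire difficulty. A smaller point: the weights $\#L^\GG_{v_0,v,n}/\#L^\GG_{v_0,*,n}$ in your reduction of part~2 to part~1 need not converge (they can oscillate periodically), so ``Ces\`aro stabilized'' needs the same regularity formalism; the paper handles this by proving that a \emph{sum} of pre-convergent sequences is pre-convergent, which again goes through the asymptotic comparison of the $a n^b c^n$ profiles.
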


Statements 2a--b of  Theorem \ref{Thm:UVU*Avg} immediately imply Theorem~\ref{Thm:MarkSemi}.
Indeed, if we assign the map $T_{\xi(e)}$ to an edge $e$,
then
\begin{equation*}
s_n(\varphi)=s^\GG_{v_0,{*},n}(\varphi).
\end{equation*}

Now we proceed to the proof of Theorem~\ref{Thm:UVU*Avg}.
Define a square matrix $\Moper(\GG)$
of order $\#\Vertex{\GG}$ with entries being operators on $L^1(X,\nu)$
by the formula
\begin{equation*}
\Moper(\GG)_{u,v}=\sum_{e\in\Edge{\GG,u,v}} \Tgr_e.
\end{equation*}
Denote also $\Mnum(\GG)_{u,v}=\# \Edge{\GG,u,v}$.
Note that if $\ONE$ is the function that
equals $1$ everywhere, then $\Tgr_e\ONE=\ONE$ for any $e\in\Edge{\GG}$. Define
the following class of operators.

\begin{definition}\label{Def:B+}\emph{A class $\Bplus$} of operators
on $L^1(X,\nu)$ is a set of all operators $A\colon L^1(X,\nu)\to L^1(X,\nu)$ such that
\begin{enumerate}
\item there exists $\lambda(A)\in\mathbb R$ such that $A(\ONE)=\lambda(A)\cdot\ONE$,
\item if $f\ge 0$ (that is, $f(x)\ge 0$ for almost all $x\in X$) then $Af\ge 0$,
\item $A(L^p(X,\nu))\subset L^p(X,\nu)$ for all $p\in[1,\infty]$,
\item $\norm{Af}_p\le\lambda(A)\norm{f}_p$ for any $p\in [1,\infty]$,
$f\in L^p(X,\nu)$.
\end{enumerate}\end{definition}

It is clear that this class is a convex cone, that is, it is closed
under linear combinations with nonnegative coefficients. Since all
$\Tgr_e$'s
belong to this class, the same is true for $\Moper(\GG)_{u,v}$, and
\begin{equation*}
\lambda(\Moper(\GG)_{u,v})=\sum_{e\in\Edge{\GG,u,v}} \lambda(\Tgr_e)=
\sum_{e\in\Edge{\GG,u,v}} 1=\Mnum(\GG)_{u,v}.
\end{equation*}
Then, consider an $n$-th power of the graph~$\GG$, that is,
a graph $\GG'=\GG^n$, where $\Vertex{\GG'}=\Vertex{\GG}$,
$\Edge{\GG'}=\{l\in\Paths{\GG}, \abs{l}=n\}$, and $I(l)=I(e_1)$, $F(l)=F(e_n)$
for $l=e_1\dots e_n\in\Edge{\GG'}$.

By definition, $\Moper(\GG^n)_{u,v}=\sum_{l\in L^\GG_{u,v,n}} \Tgr_l$.
It is also clear that $(\Moper(\GG))^n=\Moper(\GG^n)$, and
\begin{equation*}
\lambda((\Moper(\GG)^n)_{u,v})=(\Mnum(\GG)^n)_{u,v}=\#L^\GG_{u,v,n}.
\end{equation*}
Now if we define an operation $\Pop$ on the class~$\Bplus$
as $\Pop(T)=T/\lambda(T)$ if $T\ne 0$, $\Pop(0)=0$, then we have
\begin{equation*}
s^\GG_{u,v,n}=\Pop((\Moper(\GG^n))_{u,v}),\quad
c^\GG_{u,v,N}=\frac1N\sum_{n=0}^{N-1} \Pop((\Moper(\GG^n))_{u,v}).
\end{equation*}
Similarly,
\begin{equation*}
s^\GG_{v_0,{*},n}=\Pop\Biggl(\sum_{v\in\Vertex{\GG}}(\Moper(\GG^n))_{v_0,v}\Biggr),\quad
c^\GG_{v_0,{*},N}=\frac1N\sum_{n=0}^{N-1} \Pop\Biggl(\sum_{v\in\Vertex{\GG}}(\Moper(\GG^n))_{v_0,v}\Biggr).
\end{equation*}

\section{The Main Lemma}

The proof of statements 1a--b of Theorem~\ref{Thm:UVU*Avg} is obtained through
a decomposition
of the graph~$\GG$ into smaller blocks. The basic (non-decomposable) situation
is the case of a~strongly connected graph (that is, a graph such that for
any its vertices~$u,v$ there exists a path from~$u$ to~$v$)
and in this case the theorem is proven in~\cite{Buf01}. A step of
the procedure starts with a decomposition of the~set~$\Vertex{\GG}$ into two disjoint
nonempty sets~$V_1$, $V_2$ with no arcs from $V_2$ to $V_1$.
Then we apply Theorem~\ref{Thm:UVU*Avg} to the induced subgraphs with
these sets of vertices (that is, a graphs $\GG_i$, $i=1,2$,
with $\Vertex{\GG_i}=V_i$ and
$\Edge{\GG_i}=\{e\in\nobreak\Edge{\GG_i}:I(e),F(e)\in V_i\}$), and use
Lemma~\ref{Lem:OperPreConv} (see below), which is the~main technical
statement of the paper. The statements 2a--b of Theorem~\ref{Thm:UVU*Avg}
are deduced from the statements 1a--b using the same lemma.

\begin{definition}\label{Def:regular}A sequence $\{x_n\}_{n=0}^\infty$,
$x_n\ge 0$, is called \emph{regular} if there exists a number $q\in\mathbb N$ such that for
each $r=0,\dots,q-1$
one of the following statements holds:
\begingroup
\makeatletter\renewcommand\theenumi{(\@alph\c@enumi)}%
\renewcommand\labelenumi{\theenumi}\makeatother
\begin{enumerate}
\item $x_{qk+r}=0$ for all but finite number of $k\ge 0$,
\item $\lim\limits_{k\to\infty} \dfrac{x_{qk+r}}{a k^b c^k}=1$
for some $a>0$, $b\in\mathbb N$, $c\ge 1$.
\end{enumerate}\endgroup
\end{definition}

\begin{definition}\label{Def:PreConv}
A sequence $\{T_n\}_n$, $T_n\in\Bplus$, is called \emph{pre-convergent} if
\begin{enumerate}
\item the sequence $\{\lambda(T_n)\}_n$ is regular;
\item for any $\varphi\in L^p(X,\nu)$ the sequence
$\displaystyle\biggl\{\frac1N\sum_{n=0}^{N-1}\Pop(T_n)(\varphi)\biggr\}$
converges in~$L^p(X,\nu)$ as $N\to\infty$;
\item for any $\varphi\in L^\infty(X,\nu)$ the sequence
$\displaystyle\biggl\{\frac1N\sum_{n=0}^{N-1}\Pop(T_n)(\varphi)\biggr\}$ converges
\mbox{almost} everywhere as $N\to\infty$.
\end{enumerate}
\end{definition}

In these terms, Theorem~\ref{Thm:UVU*Avg} can be reformulated as follows.

\begin{proposition}\label{Prop:MPreConv}Under conditions of
Theorem~\ref{Thm:UVU*Avg} the following
statements hold.\\
\textup{1.} For any induced subgraph~$\GG'$ of the graph~$\GG$ the
sequence $\{(\Moper(\GG')^n)_{u,v}\}_n$ is pre-convergent for any
$u,v\in\Vertex{\GG'}$.\\
\textup{2.} The sequence
\begin{equation*}
\biggl\{\sum_{v\in\Vertex{\GG}}(\Moper(\GG)^n)_{v_0,v}\biggr\}_n
\end{equation*}
is pre-convergent for any $v_0\in\Vertex{\GG}$.\end{proposition}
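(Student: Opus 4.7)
The plan is to prove statement~1 by induction on $\#\Vertex{\GG'}$, with strongly connected graphs (the setting of \cite{Buf01}) as the base and the block upper-triangular decomposition described above as the inductive step; the announced Lemma~\ref{Lem:OperPreConv} is the engine that transports pre-convergence across the decomposition, and statement~2 will be deduced from statement~1 by the same lemma.

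For the base case I take $\GG'$ strongly connected (this already covers the single-vertex case). Then the nonnegative integer matrix $\Mnum(\GG')$ is irreducible, so Perron--Frobenius theory together with a decomposition by the period~$q$ of $\GG'$ shows that the scalar sequence $\#L^{\GG'}_{u,v,n}=(\Mnum(\GG')^n)_{u,v}$ is regular in the sense of Definition~\ref{Def:regular}. Ces\`aro convergence of the operator averages $\Pop((\Moper(\GG')^n)_{u,v})$---in $L^p$ for $1\le p<\infty$ and $\nu$-a.e.\ on $L^\infty$---is precisely what \cite{Buf01} establishes in the strictly irreducible Markov setting.

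For the inductive step, assume $\GG'$ is not strongly connected and take $V_2\subset\Vertex{\GG'}$ to be any sink component of the condensation of $\GG'$ and $V_1=\Vertex{\GG'}\setminus V_2$; both are nonempty, and no arc goes from $V_2$ to $V_1$. Writing $\GG_i$ for the induced subgraph on $V_i$ and $B$ for the block of $\Moper(\GG')$ collecting the arcs from $V_1$ to $V_2$, one has
\begin{equation*}
\Moper(\GG')=\begin{pmatrix}\Moper(\GG_1) & B\\ 0 & \Moper(\GG_2)\end{pmatrix},
\end{equation*}
hence
\begin{equation*}
(\Moper(\GG')^n)_{u,v}=
\begin{cases}
(\Moper(\GG_1)^n)_{u,v}, & u,v\in V_1,\\
(\Moper(\GG_2)^n)_{u,v}, & u,v\in V_2,\\
\displaystyle\sum_{i+j=n-1}\bigl(\Moper(\GG_1)^i\,B\,\Moper(\GG_2)^j\bigr)_{u,v}, & u\in V_1,\ v\in V_2,\\
0, & u\in V_2,\ v\in V_1.
\end{cases}
\end{equation*}
The first two lines are pre-convergent by the inductive hypothesis on $\GG_1,\GG_2$, which have strictly fewer vertices. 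The third line, expanded entry-wise in the intermediate vertices, is a finite sum of convolution-type expressions $\sum_{i+j=n-1}A_i\,B_0\,C_j$ in which $\{A_i\},\{C_j\}$ are pre-convergent by the inductive hypothesis and $B_0\in\Bplus$ is a fixed single-arc operator. This is precisely the situation Lemma~\ref{Lem:OperPreConv} is designed to handle, and invoking it closes the induction. Statement~2 then follows the same way: $\sum_{v\in\Vertex{\GG}}(\Moper(\GG)^n)_{v_0,v}$ is a finite sum of sequences, each pre-convergent by statement~1, and the sum-version of the same lemma gives pre-convergence of the total.

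The main obstacle is Lemma~\ref{Lem:OperPreConv} itself. Regularity of $\lambda(T_n)$ imposes delicate arithmetic constraints on each residue class modulo~$q$, and these constraints are not automatically preserved either under convolution of two regular sequences or under summation of regular sequences with distinct periods and growth constants---one must verify that the leading asymptotics combine cleanly and that the periods align after an appropriate common refinement. At the operator level, Ces\`aro convergence along such convolutions will have to be extracted by an Abel-summation / dominant-term comparison against the scalar regular asymptotics, with the pointwise a.e.\ case controlled through the maximal inequality implicit in the $L^\infty$ conclusion of the inductive hypothesis. All the technical weight sits there; the rest of the proposition, as sketched above, is clean bookkeeping of the graph decomposition.
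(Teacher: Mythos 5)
Your proposal is correct and follows essentially the same route as the paper: induction on the number of vertices, with strongly connected graphs handled by the result of \cite{Buf01} and the reducible case handled via the block-triangular decomposition $\Vertex{\GG'}=V_1\sqcup V_2$, expressing the off-diagonal block as a shifted convolution $\sum_{k+m=n-1}(\Moper(\GG_1)^k)\,B\,(\Moper(\GG_2)^m)$ and invoking the sum, shift, constant-multiplication, and convolution items of Lemma~\ref{Lem:OperPreConv}, with statement~2 then following from the sum item. You are also right that all the technical weight sits in that lemma, which the paper likewise proves separately.
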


The first statement of Proposition \ref{Prop:MPreConv} is equivalent to the statements 1a--b of
Theorem~\ref{Thm:UVU*Avg} for all
induced subgraphs of~$\GG$. This is convenient for our inductive
argument.
The basis for the induction is the following theorem.

\begin{theorem}[\cite{Buf01}]\label{Thm:Buf}
If a graph $\GG$ is strongly connected,
then the sequence $\{(\Moper(\GG)^n)_{u,v}\}_n$ is pre-convergent for any $u,v\in\Vertex{\GG}$.
\end{theorem}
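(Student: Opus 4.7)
The conditions of pre-convergence split into a numerical statement (regularity of the counting sequence) and two operator-theoretic statements (Ces\`aro $L^p$ and a.e.\ convergence). Both reduce, in the strongly connected case, to Perron--Frobenius theory combined with classical ergodic theorems for Markov operators.

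First, for the regularity of $\#L^\GG_{u,v,n}=(\Mnum(\GG)^n)_{u,v}$: since $\GG$ is strongly connected, the nonnegative integer matrix $\Mnum(\GG)$ is irreducible, so by Perron--Frobenius it has a simple positive maximal eigenvalue $\rho$, with positive left and right eigenvectors $\pi^L,\pi^R$. The remaining eigenvalues of modulus $\rho$ are of the form $\rho\omega^j$ with $\omega = e^{2\pi i/q}$, where $q$ is the period of $\GG$. The vertex set splits into $q$ cyclic classes; for $u,v$ whose classes are at cyclic distance $r$, the entry $(\Mnum(\GG)^n)_{u,v}$ vanishes unless $n\equiv r\pmod q$, and otherwise $(\Mnum(\GG)^{qk+r})_{u,v} \sim a_{u,v}\,\rho^{qk+r}$ with $a_{u,v}>0$. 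This is condition~(b) of Definition~\ref{Def:regular} with $b=0$, $c=\rho^q$, so $\{\#L^\GG_{u,v,n}\}_n$ is regular.

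Second, I would realize the normalized spherical operators $s^\GG_{u,v,n}$ as fibers of iterates of a single auxiliary Markov operator. Normalize $\pi^L,\pi^R$ so that $\mu(v):=\pi^L(v)\pi^R(v)$ is a probability measure on $\Vertex{\GG}$, and on $(X\times\Vertex{\GG},\,\nu\otimes\mu)$ define
\[
(Qf)(x,v) \;=\; \frac{1}{\rho\,\pi^R(v)}\sum_{u\in\Vertex{\GG}}\pi^R(u)\sum_{e\in\Edge{\GG,v,u}} f(\Tgr_e x,\, u).
\]
The eigenvalue relations $\sum_u\Mnum(\GG)_{v,u}\pi^R(u)=\rho\pi^R(v)$ and $\sum_v\pi^L(v)\Mnum(\GG)_{v,u}=\rho\pi^L(u)$ show that $Q\ONE=\ONE$, that $Q$ is positivity-preserving, and that $Q$ preserves $\nu\otimes\mu$; hence $Q$ is a Markov operator and a contraction on every $L^p(\nu\otimes\mu)$. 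Iterating gives
\[
(Q^n f)(x,v) \;=\; \frac{1}{\rho^n\pi^R(v)}\sum_{u}\pi^R(u)\sum_{l\in L^\GG_{v,u,n}} f(\Tgr_l x,\, u),
\]
so that for $f(x,u)$ equal to $\varphi(x)$ when $u=u_0$ and $0$ otherwise, $(Q^nf)(\,\cdot\,,v)$ equals $s^\GG_{v,u_0,n}(\varphi)$ multiplied by the scalar $\pi^R(u_0)\,\#L^\GG_{v,u_0,n}/(\rho^n\pi^R(v))$, which by step~1 tends to $\mu(u_0)$ along each residue class modulo $q$.

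Convergence of the Ces\`aro averages $\frac1N\sum_n Q^n$ in $L^p$ for $1\le p<\infty$ then follows from the mean ergodic theorem applied to the power-bounded contraction $Q$ on $L^p$. For $\varphi\in L^\infty$, since $Q$ is not self-adjoint, I would apply Rota's ``Alternierende Verfahren'' theorem to the self-adjoint Markov operator $Q^*Q$ (following the strategy of~\cite{Buf02}) to obtain a.e.\ convergence of $(Q^*Q)^n f$, and transfer this to a.e.\ convergence of the Ces\`aro averages of $Q^n f$. The main obstacle is precisely this last transfer: powers of general non-self-adjoint Markov operators need not converge a.e.\ on $L^1$ (Ornstein), so one must combine a maximal inequality with a careful residue-class decomposition that handles the period $q$ of $\GG$ and absorbs the Perron--Frobenius subleading corrections into the Ces\`aro averaging.
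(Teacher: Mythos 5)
The paper does not actually prove this statement --- it is imported from \cite{Buf01}, with only the remark that regularity of $\{(\Mnum(\GG)^n)_{u,v}\}_n$ follows from Perron--Frobenius and that the convergence assertions are Theorems 1 and 2 of \cite{Buf01}. Your argument is therefore a genuinely independent route, and most of it is sound. The Perron--Frobenius analysis along cyclic classes is exactly the right way to get regularity (with $b=0$, $c=\rho^q$); the operator $Q$ on $(X\times\Vertex{\GG},\nu\otimes\mu)$ is positive, fixes $\ONE$, and preserves $\nu\otimes\mu$ by the two eigenvector identities, hence is a contraction of every $L^p$; and its powers do recover the normalized spherical operators fibrewise, up to scalars converging to positive constants along the relevant residue classes, so a Claim~\ref{Clm:ApprNorm}-type argument transfers Ces\`aro convergence of $Q^nf$ to Ces\`aro convergence of $s^\GG_{u,v,n}(\varphi)$. (Two small points: you should orient $Q$ so that the composition order of the $\Tgr_e$'s matches the convention $\Tgr_{e_1\dots e_k}=\Tgr_{e_1}\cdots\Tgr_{e_k}$; and for $p=1$ the mean ergodic theorem for power-bounded operators on reflexive spaces does not apply directly --- use instead that the averages $\frac1N\sum_{n<N}Q^n$ are uniformly $L^1$-bounded and converge on the dense subspace $L^2$.)

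The genuine gap is the almost-everywhere step. Rota's theorem gives a.e.\ convergence of the backward--forward products $(Q^*)^nQ^nf$, and passing from these to $Q^{2n}f$ (or to $Q^nf$) requires $Q^*=Q$, which in \cite{Buf02} comes from the symmetry $g\mapsto g^{-1}$ of the generating set; a general strongly connected graph labelled by arbitrary measure-preserving maps has no such symmetry, so the ``transfer'' you invoke does not exist --- as you yourself concede. The repair is, however, much simpler than the maximal-inequality scheme you gesture at: you do not need a.e.\ convergence of the powers $Q^nf$ (which, by Ornstein's example, may genuinely fail), only of their Ces\`aro averages, and for a positive operator that is simultaneously an $L^1$- and an $L^\infty$-contraction the Hopf--Dunford--Schwartz pointwise ergodic theorem gives a.e.\ convergence of $\frac1N\sum_{n=0}^{N-1}Q^nf$ for \emph{every} $f\in L^1(\nu\otimes\mu)$, with no self-adjointness and no separate treatment of the period $q$ (the terms $Q^nf(\,\cdot\,,v)$ off the correct residue class vanish identically, so the full Ces\`aro average is already the residue-class average up to the factor $1/q$). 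Replacing your last paragraph by an appeal to Dunford--Schwartz closes the proof, and in fact yields the a.e.\ statement for all $\varphi\in L^1$, which is stronger than what Definition~\ref{Def:PreConv} demands.
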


\begin{remark}1. Regularity of the sequence $\{\lambda((\Moper(\GG)^n)_{u,v})\}_n=
\{(\Mnum(\GG)^n)_{u,v}\}_n$ in the case of strongly connected graph
follows from the Perron---Frobenius theorem.\\
2. Convergence of $c^\GG_{u,v,N}$ in~$L^1(X,\nu)$ and almost everywhere
(for functions in~$L^1(X,\nu)$) is shown
in \cite{Buf01} (see Theorems 1, 2;
note that strong connectivity of~$\GG$ is called
irreducibility of~$A=\Mnum(\GG)$ in \cite{Buf01}).
$L^p$-convergence for functions in $L^p(X,\nu)$ follows
immediately.\end{remark}

The step of the inductive procedure relies on the following lemma.

\begin{lemma}\label{Lem:OperPreConv}If sequences $\{F_n\}$ and $\{G_n\}$
of operators from the class~$\Bplus$ are pre-convergent,
then the following ones are also pre-convergent:
\begingroup\renewcommand\labelenumi{\textup{\theenumi.}}
\begin{enumerate}
\item
$\{H^{(1)}_n\}_n$, $H^{(1)}_n=F_n$ for $n\ge n_0$,
$H^{(1)}_n\in\Bplus$\textup;
\item\label{Item:Shift}
$\{H^{(2)}_n=F_{n+M}\}_n$ for any $M\in\mathbb Z$\textup;
\item\label{Item:MultConst}
$\{H^{(\mathrm{3a})}_n=AF_n\}_n$, $\{H^{(\mathrm{3b})}_n=F_nA\}_n$,
where $A\in\Bplus$\textup;
\item\label{Item:Sum} $\{H^{(4)}_n=F_n+G_n\}_n$\textup;
\item\label{Item:Convolution} $\{H^{(5)}_n=\sum_{k+m=n}F_kG_m\}_n$.
\end{enumerate}
\end{lemma}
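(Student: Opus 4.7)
The plan is to verify the three conditions of Definition~\ref{Def:PreConv}---regularity of $\{\lambda(H_n)\}$, and $L^p$ and almost-everywhere convergence of the Cesàro averages of $\Pop(H_n)$---for each of the five constructions. Two identities do most of the bookkeeping: $\lambda(AB)=\lambda(A)\lambda(B)$ and $\Pop(AB)=\Pop(A)\Pop(B)$ for $A,B\in\Bplus$ with nonzero $\lambda$; and every $\Pop(A)$ is a positive $L^p$-contraction for all $p\in[1,\infty]$. Items~1 and 2 are immediate: regularity depends only on eventual behavior, and a finite modification or shift alters the Cesàro average by $O(1/N)$ in $L^\infty$, which vanishes in every $L^p$ and almost everywhere.

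For item~3, regularity is inherited from $\lambda(AF_n)=\lambda(A)\lambda(F_n)$ and one factors $\Pop(AF_n)=\Pop(A)\Pop(F_n)$. The right-multiplication case 3b reduces directly to pre-convergence of $\{F_n\}$ applied to $\Pop(A)\varphi$. The left-multiplication case 3a gives Cesàro averages $\Pop(A)\psi_N$ with $\psi_N:=\frac1N\sum\Pop(F_n)\varphi$; the $L^p$ statement follows from continuity of $\Pop(A)$. For the almost-everywhere statement with $\varphi\in L^\infty$, set $g_N:=\sup_{M\ge N}|\psi_M-\psi_\infty|$: this is a decreasing, uniformly bounded, a.e.-vanishing sequence, so $g_N\to 0$ in $L^1$ by dominated convergence; since $\Pop(A)$ is positive and $L^1$-bounded, $\Pop(A)g_N$ is a decreasing nonnegative sequence tending to zero in $L^1$ and hence a.e., and it dominates $|\Pop(A)(\psi_N-\psi_\infty)|$.

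Item~4 requires first that the sum of two regular sequences be regular: on each residue class modulo $q=\mathrm{lcm}(q_F,q_G)$ one compares the asymptotics $ak^bc^k$ and $a'k^{b'}{c'}^k$; one dominates unless $(b,c)=(b',c')$, in which case the leading coefficients add (and are positive). Writing $\Pop(F_n+G_n)=\alpha_n\Pop(F_n)+(1-\alpha_n)\Pop(G_n)$ with $\alpha_n:=\lambda(F_n)/(\lambda(F_n)+\lambda(G_n))$, the same case analysis shows $\alpha_n\to\alpha^{(r)}\in[0,1]$ on each residue class. I will split the Cesàro sum according to residue classes, replace $\alpha_n$ by $\alpha^{(r)}$ modulo an $o(1)$ error controlled by the uniform $L^\infty$-boundedness of $\Pop(F_n),\Pop(G_n)$, and thereby reduce to Cesàro convergence of the subsequences $\{\Pop(F_{qk+r})\varphi\}_k$ and $\{\Pop(G_{qk+r})\varphi\}_k$. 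The latter is a residue-wise strengthening of pre-convergence that I expect to be available from the inductive construction and from Perron--Frobenius in the base case, Theorem~\ref{Thm:Buf}.

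Item~5 (convolution) is the most intricate. Regularity of $\{\sum_{k+m=n}\lambda(F_k)\lambda(G_m)\}_n$ follows by a similar but more detailed analysis: restricting to any pair of residue classes and inserting the asymptotics, the sum concentrates on the region where $c_F^kc_G^m$ is maximal under the constraint $k+m=n$, yielding a regular sequence whose leading data can be read off. For Cesàro convergence one writes $\Pop(H_n^{(5)})=\sum_{k+m=n}w_{k,m,n}\Pop(F_k)\Pop(G_m)$, shows the weights concentrate on that dominant diagonal, handles the off-diagonal remainder using the $L^p$-contraction property of each $\Pop$, and on the diagonal invokes the residue-wise pre-convergence of both $\{F_n\}$ and $\{G_n\}$. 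The principal obstacle---and, I expect, the technical heart of the argument---is precisely this asymptotic bookkeeping for item~5: managing a double-indexed weighted sum, splitting it into a dominant region where pre-convergence applies and a negligible remainder, while simultaneously tracking all the residue-class interactions.
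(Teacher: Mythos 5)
Your treatment of items 1--3 is sound and matches the paper's, and for item 4 your convex-combination identity $\Pop(F_n+G_n)=\alpha_n\Pop(F_n)+(1-\alpha_n)\Pop(G_n)$ is equivalent to the paper's device of replacing $\Pop(T_n)$ by $\hat T_n=T_n/(a(n+1)^bc^n)$ (Claim~\ref{Clm:ApprNorm}). Note only that your repeated appeal to ``residue-wise pre-convergence'' is a genuine strengthening of the stated hypothesis --- Ces\`aro convergence of a sequence does not imply Ces\`aro convergence along arithmetic progressions --- although the paper's own reduction to $q=1$ quietly assumes the same thing, so this is a shared imprecision rather than a defect specific to your argument.

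The genuine gap is in item 5, in the case $c_F=c_G$. Your strategy is: the weights concentrate on the dominant diagonal, the off-diagonal remainder is negligible, and on the dominant region one invokes pre-convergence. When $c_F>c_G$ this is exactly right (the mass of $\lambda(F_k)\lambda(G_m)$ over $k+m=n$ concentrates on bounded $m$, with summable tail $\lambda(G_m)c_F^{-m}$), and your sketch can be completed along the paper's lines. But when $c_F=c_G$ there is no concentration at all: after normalization the weight of the term $(k,m)$ with $k+m=n$ is comparable to $(k+1)^u(m+1)^v/(n+1)^{u+v+1}$, which is spread over all $\sim n$ terms (for $u=v=0$ it is exactly uniform, $1/(n+1)$). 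The required statement is then convergence of the genuinely two-dimensional average $\frac1N\sum_{k+m<N}\frac{(k+1)^u(m+1)^v}{(k+m+1)^{u+v+1}}\hat F_k\hat G_m(\varphi)$, and this does not follow from Ces\`aro convergence of $\{\hat F_k\}$ and $\{\hat G_m\}$ separately by any soft concentration argument. This is the technical heart of the paper (Proposition~\ref{Prop:Conv}): one subtracts the limit operators, writing $X_k^*=X_k-(k+1)^uX^0$, expands the convolution into four terms, evaluates the $X^0Y^0$ term as a Riemann sum converging to $B(u+1,v+1)$, and controls the terms involving $X^*$ and $Y^*$ by double Abel summation, mean-value estimates on the weights, and the nontrivial fact (Claim~\ref{Clm:Mnto0}) that $\sup_{k+m=n}\norm{A_kB_m(\varphi)}_p\to0$ (resp.\ pointwise a.e.), whose almost-everywhere version requires careful bookkeeping of exceptional null sets via Proposition~\ref{Prop:B+ae-conv}. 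None of this is residue-class bookkeeping; your proposal as written does not contain the idea needed for the equal-growth-rate case, which is precisely the case that arises, e.g., when convolving a sequence with itself.
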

\endgroup
We now derive Proposition~\ref{Prop:MPreConv} from Lemma~\ref{Lem:OperPreConv}.

\begin{proof}[Proof of Proposition~\ref{Prop:MPreConv}.]
1. The proof of the first statement is by induction on the number of
vertices in~$\GG'$.

(a) Any graph~$\GG'$ with $\#\Vertex{\GG'}=1$ is strongly connected,
thus we can apply Theorem~\ref{Thm:Buf}.

(b) Take any induced subgraph $\GG'$ with $k$ vertices
and suppose that the~statement holds for any induced subgraph of~$\GG$
with less than $k$ vertices.
Then there are two cases:
(1)~$\GG'$~is strongly connected;
(2)~$\GG'$ can be decomposed as follows: $\Vertex{\GG'}=V_1\sqcup V_2$,
$V_{1,2}\ne\varnothing$, and there are no arcs from~$V_2$ to $V_1$.

In the first case we may apply Theorem~\ref{Thm:Buf}.
In the second case consider graphs $\GG_{1,2}$ that are induced subgraphs
with $\Vertex{\GG_i}=V_i$. Since $\GG_{1,2}$ have less that $k$ vertices,
the theorem holds for them.

Now consider $c^{\GG'}_{u,v,N}$.
If $u,v\in V_i$, $i=1,2$, a path from $u$ to $v$ can't leave~$\GG_i$, so
$(\Moper(\GG')^n)_{u,v}=(\Moper(\GG_i)^n)_{u,v}$, hence
$c^{\GG'}_{u,v,N}=c^{\GG_i}_{u,v,N}$, and the~statement is reduced
to the one for $\GG_i$. The case $u\in V_2$, $v\in V_1$ is even simpler:
there are no paths from $u$ to $v$, so $c^{\GG'}_{u,v,N}=0$ for all $N$.

The only nontrivial case is $u\in V_1$, $v\in V_2$.
Here
\begin{equation*}
(\Moper(\GG')^n)_{u,v}=\sum_{\substack{k+m=n-1\\v'\in V_1, u'\in V_2}}
(\Moper(\GG_1)^k)_{u,u'}\Moper(\GG')_{u',v'}
(\Moper(\GG_2)^m)_{v',v}
\end{equation*}
and the statement follows from Lemma~\ref{Lem:OperPreConv}. Indeed,
by assumption, the sequence $\{(\Moper(\GG_1)^n)_{u,u'}\}_n$ is pre-convergent,
hence, by item \ref{Item:MultConst}b of this lemma,
the sequence $\{(\Moper(\GG_1)^n)_{u,u'}\Moper(\GG')_{u',v'}\}_n$ is.
Then, as $\{(\Moper(\GG_2)^n)_{v',v}\}_n$ is pre-convergent by assumption,
item~\ref{Item:Convolution} gives us that
\begin{equation*}
\bigl\{A^{u,u',v,v'}_n=\sum_{k+m=n}
(\Moper(\GG_1)^k)_{u,u'}\Moper(\GG')_{u',v'}
(\Moper(\GG_2)^m)_{v',v}\bigr\}_n
\end{equation*}
is also pre-convergent. Now $\{A^{u,u',v,v'}_{n-1}\}_n$
is pre-convergent by item~\ref{Item:Shift}, and, finally the sequence
\begin{equation*}
\Bigl\{(\Moper(\GG')^n)_{u,v}=\sum_{v'\in V_1, u'\in V_2}A^{u,u',v,v'}_{n-1}\Bigr\}_n
\end{equation*}
is pre-convergent by item~\ref{Item:Sum} of Lemma~\ref{Lem:OperPreConv}.

2. The second statement follows immediately from item~\ref{Item:Sum} of
Lemma~\ref{Lem:OperPreConv}.\end{proof}

\section{Proof of Lemma~\ref{Lem:OperPreConv}}

The rest of the paper is devoted to the proof of Lemma~\ref{Lem:OperPreConv}.
The proof will often use the following proposition.

\begin{proposition}\label{Prop:B+ae-conv}
Let $A\in\Bplus$, $\varphi_n\in L^\infty(X,\nu)$, $\norm{\varphi_n}_\infty\le C$,
$\varphi_n(x)\to\varphi(x)$ for almost all $x\in X$.
Then $(A\varphi_n)(x)\to (A\varphi)(x)$
for almost all $x\in X$.
\end{proposition}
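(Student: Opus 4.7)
The plan is to adapt the proof of the dominated convergence theorem, using positivity of $A$ for a pointwise domination and its $L^1$-contractivity to control the dominating sequence. Define
\[
\psi_n(x) := \sup_{k \ge n} \abs{\varphi_k(x) - \varphi(x)}.
\]
Since $\norm{\varphi_n}_\infty \le C$ and $\varphi_n \to \varphi$ a.e., also $\norm{\varphi}_\infty \le C$, so each $\psi_n$ is a measurable function bounded by $2C$. By construction $\{\psi_n\}$ decreases pointwise and tends to $0$ on the full-measure set where $\varphi_n \to \varphi$.

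From $-\psi_n \le \varphi_k - \varphi \le \psi_n$ for all $k \ge n$, linearity together with condition~(2) of Definition~\ref{Def:B+} gives
\[
\abs{A\varphi_k - A\varphi} \le A\psi_n \quad\text{a.e., for all } k \ge n,
\]
so it suffices to show $A\psi_n \to 0$ a.e. Applying positivity to the nonnegative differences $\psi_n - \psi_{n+1}$ shows that $\{A\psi_n\}$ is itself a.e.\ decreasing, hence converges a.e.\ to some $g \ge 0$. On the other hand, $\psi_n \to 0$ in $L^1$ by the classical dominated convergence theorem (applicable since $(X,\nu)$ is a probability space and $\psi_n \le 2C$), and condition~(4) of Definition~\ref{Def:B+} with $p = 1$ yields $\norm{A\psi_n}_1 \le \lambda(A)\norm{\psi_n}_1 \to 0$. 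Thus $A\psi_n \to 0$ in $L^1$, which, combined with the a.e.\ monotone limit $g$, forces $g = 0$ a.e. Letting $n,k \to \infty$ in the displayed inequality concludes the proof.

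I do not expect a serious obstacle: the class $\Bplus$ was engineered to support exactly this dominated-convergence argument, bundling the positivity that produces the pointwise domination $\abs{A\varphi_k - A\varphi} \le A\psi_n$ with the $L^1$-contractivity that drives the dominating sequence to $0$. The only minor point of care is the inequality $\abs{Af} \le A\abs{f}$ used in step two, which follows by decomposing $f = f^+ - f^-$ and applying linearity and condition~(2) of Definition~\ref{Def:B+}.
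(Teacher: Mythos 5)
Your proof is correct and follows essentially the same route as the paper: build the monotone envelope $\psi_n$, use positivity of $A$ to dominate $\abs{A\varphi_k-A\varphi}$ by $A\psi_n$, and combine the a.e.\ monotonicity of $A\psi_n$ with $\norm{A\psi_n}_1\le\lambda(A)\norm{\psi_n}_1\to0$ to force the a.e.\ limit to vanish. The only (harmless) difference is that you fold the paper's reduction to $\varphi=0$ and the splitting into positive and negative parts into the single definition $\psi_n=\sup_{k\ge n}\abs{\varphi_k-\varphi}$.
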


\begin{proof}Clearly, it is sufficient to prove this only for $\varphi=0$.

Further, decompose $\varphi_n$ as $\varphi_n=\varphi^+_n-\varphi^-_n$,
where $\varphi^\pm_n=\max(0,\pm\varphi_n)$, $\norm{\varphi^\pm_n}_\infty\le\norm{\varphi_n}_\infty\le C$.
Therefore, if we prove that $A\varphi^\pm_n\aeto 0$, then
$A\varphi_n=A\varphi^+_n-A\varphi^-_n\aeto 0$.
So we can assume that $\varphi_n\ge 0$.

Now, take $\psi_n(x)=\sup\{\varphi_k(x)\mid k\ge n\}$. Then $\psi_n(x)$
is monotonically nonincreasing and tends to zero for almost all~$x\in X$.
Since $0\le \varphi_n\le \psi_n$, the same is true for their images:
$0\le A\varphi_n\le A\psi_n$ and therefore, it is sufficient to prove that
$A\psi_n\aeto 0$. But as $A\psi_n(x)$ is nonnegative and nonincreasing,
there is a limit $\theta(x)=\lim_{n\to\infty} A\psi_n(x)$, and, by monotone convergence theorem,
\begin{equation*}
\norm{A\psi_n-\theta}_1=\int_X A\psi_n(x)-\theta(x)\, d\nu(x)\to 0
\end{equation*}
Therefore, $A\psi_n\to\theta$ in $L^1(X,\nu)$.
But $A$ is a bounded operator in $L^1(X,\nu)$
and $\norm{\psi_n}_1\to 0$  (also due to monotone convergence theorem),
so $A\psi_n\to 0$ in~$L^1(X,\nu)$.
Thus $\theta(x)=0$ almost everywhere.\end{proof}

\begin{proof}[Proof of Lemma~\ref{Lem:OperPreConv}.]The plan of the proof
is the following. After some preparations, we'll prove the first condition
in Definition~\ref{Def:PreConv} for all sequences $\{H^{(*)}_n\}_n$ (here
and below the asterisk ${*}$ denotes one of the symbols $1$, $2$, $3\mathrm{a}$,
$3\mathrm{b}$, $4$, $5$), and then we'll prove the second and the third
condtions of that Definition simultaneously.

1. First of all, it is
sufficient to prove that this lemma
holds for the sequences $\{\lambda(F_n)\}$ and $\{\lambda(G_n)\}$ satisfying
Definition~\ref{Def:regular} with $q=1$ (and that in this case
the sequence $\{\lambda(H^{(*)}_n)\}$ is also regular with $q=1$).

Indeed, in general case we take $q$ to be the least common multiple
of~$q_F$ and $q_G$ (i.~e., $q$'s from Definition~\ref{Def:regular} for
the sequences $\{F_n\}_n$ and $\{G_n\}_n$). For~${*}\ne\nobreak 5$, it is clear that
for a given $r=0,\dots, q-1$ the sequence
$\{H^{(*)}_{qs+r}\}_s$
depends in the same fashion on one of $\{F_{qs+r'}\}_s$
and $\{G_{qs+r''}\}_s$ with some~$r',r''$.

Now consider ${*}=5$. Let $k=qu+r'$, $m=qv+r''$ ($u,v\ge 0$, $0\le  r',r''\le q-1$) and decompose the
sum
\begin{equation*}
\sum_{k+m=qs+r}F_kG_m
\end{equation*}
into $q$~sums
corresponding to all possible pairs $(r',r'')$ (there are only $q$ possibilities,
since $r'+r''\equiv r\pmod{q}$):
\begin{equation*}
H^{(5)}_{qs+r}=\sum_{r'+r''\equiv r\:(\mathrm{mod}\:q)}S^{r',r''}_s,
\end{equation*}
where
\begin{equation*}
S^{r',r''}_s=\sum_{\substack{u,v\ge 0\\(qu+r')+(qv+r'')=qs+r}}F_{qu+r'}G_{qv+r''}=
\sum_{\substack{u,v\ge 0\\u+v=s+\frac{r-r'-r''}{q}}}F_{qu+r'}G_{qv+r''},
\end{equation*}
that is, the sequence $\{S^{r',r''}_s\}_s$ is the convolution of the
sequences $\{F_{qs+r'}\}_s$
and $\{G_{qs+r''}\}_s$ shifted by $\frac{r-r'-r''}q\in\{-1,0\}$.

\medskip

2. Let us prove that the sequences $\{\lambda(H^{(*)}_n)\}_n$ are regular.
For ${*}=1,2,\mathrm{3a},\mathrm{3b}$ this is clear from the definitions.
Let ${*}=4$.
If $\{\lambda(F_n)\}$
or $\{\lambda(G_n)\}$ contains only finitely many nonzero elements, this
is clear.
Otherwise, let $(a_F,b_F,c_F)$ and $(a_G,b_G,c_G)$ be the constants
given in Definition~\ref{Def:regular}
for these sequences.

If (1)~$c_G<c_F$ or (2)~$c_G=c_F$, $b_G<b_F$, then
\begin{equation*}
\lim_{k\to\infty} \frac{\lambda(G_k)}{a_F k^{b_F}c_F^k}=0,
\end{equation*}
so
\begin{equation*}
\lim_{k\to\infty} \frac{\lambda(F_k+G_k)}{a_F k^{b_F}c_F^k}=1.
\end{equation*}
The symmetric cases ($1'$)~$c_F<c_G$;
($2'$)~$c_F=c_G$, $b_F<b_G$ are similar.
The only remaining case is $c_F=c_G=c$, $b_F=b_G=b$. Here
\begin{equation*}
\lim_{k\to\infty} \frac{\lambda(F_k+G_k)}{(a_F+a_G) k^b c^k}=1.
\end{equation*}

Now let ${*}=5$. The case of finitely many nonzeros is again clear,
otherwise we can assume that $c_F\ge c_G$. There are two cases, $c_F>c_G$
and $c_F=c_G$.

Suppose that $c_F>c_G$.
Then
\begin{equation*}
\frac{\sum\limits_{k+m=n}\lambda(F_k)\lambda(G_m)}{a_F (n+1)^{b_F}c_F^n}=
\sum_{m=0}^n \lambda(G_m)\biggl(\frac{\lambda(F_{n-m})}{a_F (n+1)^{b_F}c_F^n}\biggr).
\end{equation*}
Let us prove that this sum tends to $\sum_{m=0}^\infty \lambda(G_m)c_F^{-m}$.

Denote\footnote{We write $(n+1)^b$ in the denominator
instead of $n^b$ to have well-defined $\alpha_0$. Nevertheless,
$\alpha_n$ tends to $1$.}
\begin{equation*}
\alpha_n=\frac{\lambda(F_n)}{a_F (n+1)^{b_F}c_F^n},\quad
\beta_n=\frac{\lambda(G_m)}{c_F^m}
\end{equation*}
and fix $\eps>0$.
Note that the series $\sum_{m=0}^\infty \lambda(G_m)c_F^{-m}=
\sum_{m=0}^\infty\beta_n$ converges absolutely, so there is $m_0$ such that
$\sum_{m>m_0} \beta_m<\eps$.
Let $A$ be an upper bound for all $\alpha_n$, $n\ge 1$ (it exists
since $\alpha_n\to1$).
Then
\begin{multline*}
\Biggl|\sum_{m=0}^n \beta_m\alpha_{n-m}\Bigl(\frac{n-m+1}{n+1}\Bigr)^{b_F}-
\sum_{m=0}^\infty \beta_m\Biggr|\le{}\\
\begin{aligned}
{}\le{}&\Biggl|\sum_{m=0}^{m_0} \beta_m\biggl[\alpha_{n-m}\Bigl(1-\frac{m}{n+1}\Bigr)^{b_F}-1\biggr]\Biggr|+\\
&{}+\Biggl|\sum_{m=m_0+1}^{n}\beta_m\alpha_{n-m}\Bigl(1-\frac{m}{n+1}\Bigr)^{b_F}\Biggr|+
\Biggl|\sum_{m=m_0+1}^{\infty}\beta_m\Biggr|.
\end{aligned}
\end{multline*}
The last term is less than $\eps$, the second one is less than $A\eps$ and,
if $n$ is sufficiently large, the first term is less than $\eps$,
hence the whole difference is less than $(2+A)\eps$ for sufficiently large $n$.
Thus, $\{\lambda(H^{(5)}_n\}$ is regular with
\begin{equation*}
a_H=a_F\sum_{m=0}^\infty\lambda(G_m)c_F^{-m},\quad b_H=b_F,\quad c_H=c_F.
\end{equation*}

Now let $c_F=c_G=c$. In this case we have
\begin{multline}\label{eq:lambda5b}
\frac{\sum_{k+m=n}\lambda(F_k)\lambda(G_m)}{a_Fa_G (n+1)^{b_F+b_G+1}c^n}={}\\
\frac1{n+1}\sum_{k=0}^{n}
\underbrace{\vphantom{\frac\strut{\strut c^{n-k}_G}}\frac{\lambda(F_k)}{\strut a_F(k+1)^{b_F}c_F^k}}_{\alpha_k}
\underbrace{\vphantom{\frac\strut{\strut c^{n-k}_G}}\frac{\lambda(G_{n-k})}{\strut a_G(n-k+1)^{b_G}c_G^{n-k}}}_{\beta_{n-k}}
\underbrace{\vphantom{\frac\strut{\strut c^{n-k}_G}}\biggl(\frac{k+1}{\strut n+1}\biggr)^{b_F}\Bigl(1-\frac k{\strut n+1}\Bigr)^{b_G}}_{\gamma_{n,k}}
\end{multline}
and denote $\alpha_k$, $\beta_{n-k}$ and $\gamma_{n,k}$ as it is shown here.
Let us show that
\begin{equation*}
\lim_{n\to\infty}\frac1{n+1}\sum_{k=0}^n \alpha_k\beta_{n-k}\gamma_{n,k}-
\frac1{n+1}\sum_{k=0}^n \gamma_{n,k}=0.
\end{equation*}
Indeed, by Definition~\ref{Def:regular}, the
sequences $\{\alpha_k\}$ $\{\beta_k\}$ tends to~$1$,
hence there are $A,B$ such that $\alpha_k\le A$, $\beta_k<B$ for all $k$.
Take any $\eps<1$ and find $p$ such that $\abs{\alpha_k-1}<\eps$,
$\abs{\beta_k-1}<\eps$
for all $k\ge p$.
Then
\begin{multline*}
\Delta_n=\frac1{n+1}\sum_{k=0}^n \alpha_k\beta_{n-k}\gamma_{n,k}-
\frac1{n+1}\sum_{k=0}^n \gamma_{n,k}={}\\
\frac1{n+1}\Biggl(
\sum_{k=0}^{p-1}+\sum_{k=p}^{n-p} +\sum_{k=n-p+1}^{n}
\Biggr)
(\alpha_k\beta_{n-k}-1)\gamma_{n,k}.
\end{multline*}
Since $0\le\gamma_{n,k}\le 1$, any term of the first and the last sums
is bounded by $AB+1$ and any term of the middle sum is bounded by $2\eps+\eps^2\le 3\eps$.
Therefore, we have
\begin{equation*}
\Delta_n\le \frac{2p(AB+1)+3\eps(n+1-2p)}{n+1}\le 3\eps+\frac{2(AB+1)p}{n+1}.
\end{equation*}
If $n$ is large enough then the last term is less than $\eps$,
hence~$\Delta_n\le 4\eps$.

It remains to find the limit
\begin{equation*}
\lim_{n\to\infty}\frac1{n+1}\sum_{k=0}^n\gamma_{n,k}.
\end{equation*}
We have
\begin{equation*}
\frac1{n+1}\sum_{k=0}^n\gamma_{n,k}=\Bigl(\frac{n+2}{n+1}\Bigr)^{b_F+b_G+1}
\biggl(\frac 1{n+2}
\sum_{j=1}^{n+1}\Bigl(\frac j{n+2}\Bigr)^{b_F}
\Bigl(1-\frac j{n+2}\Bigr)^{b_G}\biggr).
\end{equation*}
The first multiplier tends to~$1$. The second one equals the
Riemann sum of~the~function $f(x)=x^{b_F}(1-x)^{b_G}$ over the unit interval
with the partition
\begin{equation*}
\Bigl\{x_i=\frac{i}{n+2}\Bigr\}_{i=0}^{n+2},\quad \{t_i=x_i\}_{i=0}^{n+1},
\end{equation*}
hence it tends to $\int_0^1 f(x)\,dx=B(b_F+1,b_G+1)$.
Thus, in this case $\{\lambda(H^{(5)}_n\}$ is regular with the constants
\begin{equation*}
a_H=a_Fa_G B(b_F+1,b_G+1),\quad b_H=b_F+b_G+1,\quad c_H=c.
\end{equation*}

\medskip

3. We proceed to the proof of the second and the third conditions
in~Definition~\ref{Def:PreConv}.

For ${*}=1,2$ the difference between Ces\`aro sums satisfies the relations
\begin{align*}
\frac1N\sum_{n=0}^{N-1}\Pop(H^{(1)}_n)-\frac1N\sum_{n=0}^{N-1}\Pop(F_n)&{}=
\frac1N\sum_{n=0}^{n_0-1}\bigl(\Pop(H^{(1)}_n)-\Pop(F_n)\bigr),\\
\frac1N\sum_{n=0}^{N-1}\Pop(H^{(2)}_n)-\frac1N\sum_{n=0}^{N-1}\Pop(F_n)&{}=
\frac1N\sum_{n=0}^{M-1}\bigl(\Pop(F_{N+n})-\Pop(F_n)\bigr),
\end{align*}
whence it tends to zero even in operator norm in any $L^p(X,\nu)$, $p\in[1,\infty]$.

For ${*}=\mathrm{3a},\mathrm{3b}$ the conditions follows from the identities
\begin{align*}
\frac1N\sum_{n=0}^{N-1}\Pop(H^{(\mathrm{3a})}_n)&{}=\Pop(A)\biggl(\frac1N\sum_{n=0}^{N-1}\Pop(F_n)\biggr),\\
\frac1N\sum_{n=0}^{N-1}\Pop(H^{(\mathrm{3b})}_n)&{}=\biggl(\frac1N\sum_{n=0}^{N-1}\Pop(F_n)\biggr)\Pop(A).
\end{align*}

The only remaining cases are ${*}=4,5$. Let us show that we can make
``approximate'' normalisations of operators instead of ``precise''
ones (that~is,~$\Pop(\,\cdot\,)$) in the second and the third conditions
in Definition~\ref{Def:PreConv}. Speaking formally, the following holds.

\pagebreak

\begin{claim}\label{Clm:ApprNorm}Suppose that the sequence $\{T_n\}_n$,
$T_n\in\Bplus$, satisfies the condition
\begin{equation*}
\lim_{n\to\infty}\frac{\lambda(T_n)}{an^bc^n}=1
\end{equation*}
with some $a>0$, $b\in\mathbb N$, and $c\ge1$. Let
\begin{equation*}
\hat T_n=\frac{T_n}{a(n+1)^bc^n}.
\end{equation*}
Then for any $\varphi\in L^p(X,\nu)$ the sequences
\begin{equation*}
C_N(\varphi)=\frac1N\sum_{n=0}^{N-1}\Pop(T_n)(\varphi)\quad\text{and}\quad
C'_N(\varphi)=\frac1N\sum_{n=0}^{N-1}\hat T_n(\varphi)
\end{equation*}
converge \textup(in $L^p$ or a.~e.\textup) simultaneously and their limits
coincide.
\end{claim}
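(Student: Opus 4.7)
The plan is to reduce the claim to the observation that $\hat T_n$ differs from $\Pop(T_n)$ by a scalar factor tending to $1$, and that operators in $\Bplus$ normalised by $\lambda$ are contractions.

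First, I would make the scalar comparison. For those $n$ with $T_n\ne 0$ (which, by the asymptotic $\lambda(T_n)\sim a n^b c^n$, is the case for all sufficiently large $n$), set
\begin{equation*}
\eps_n=\frac{\lambda(T_n)}{a(n+1)^b c^n},
\end{equation*}
so that $\hat T_n=\eps_n\Pop(T_n)$, and for $T_n=0$ we have $\Pop(T_n)=0=\hat T_n$. The given hypothesis together with $(n/(n+1))^b\to1$ yields $\eps_n\to1$. Writing
\begin{equation*}
C_N(\varphi)-C'_N(\varphi)=\frac1N\sum_{n=0}^{N-1}(1-\eps_n)\Pop(T_n)(\varphi),
\end{equation*}
the problem is reduced to showing that this difference tends to $0$ in a sense strong enough for both modes of convergence.

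Next, I would invoke condition 4 of Definition~\ref{Def:B+}: it says that $\norm{T_n f}_p\le\lambda(T_n)\norm{f}_p$ in every $L^p(X,\nu)$, $1\le p\le\infty$. Dividing by $\lambda(T_n)$ when it is positive shows that $\Pop(T_n)$ is a contraction on every $L^p(X,\nu)$. Hence for any $p\in[1,\infty]$ and any $\varphi\in L^p(X,\nu)$,
\begin{equation*}
\norm{C_N(\varphi)-C'_N(\varphi)}_p\le\Biggl(\frac1N\sum_{n=0}^{N-1}\abs{1-\eps_n}\Biggr)\norm{\varphi}_p.
\end{equation*}
Since $\eps_n\to1$, the Cesàro average $\frac1N\sum_{n=0}^{N-1}\abs{1-\eps_n}$ tends to $0$, so the right-hand side tends to $0$.

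Finally, I would read off both conclusions. For $\varphi\in L^p(X,\nu)$ with $p<\infty$, the above bound gives $C_N(\varphi)-C'_N(\varphi)\to0$ in $L^p$, so $C_N(\varphi)$ and $C'_N(\varphi)$ are $L^p$-convergent simultaneously, with the same limit. For a.e. convergence, I apply the bound with $p=\infty$: for $\varphi\in L^\infty(X,\nu)$, $\norm{C_N(\varphi)-C'_N(\varphi)}_\infty\to0$, which means $(C_N-C'_N)(\varphi)\to0$ uniformly off a null set, so the two sequences have the same a.e.\ behaviour. I do not expect any real obstacle here; the only technicality is handling the possibly finitely many indices where $\lambda(T_n)=0$, which is absorbed into the $\frac1N$-factor.
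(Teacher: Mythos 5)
Your proof is correct and follows essentially the same route as the paper: both identify $\hat T_n=\gamma_n\Pop(T_n)$ with $\gamma_n=\lambda(T_n)/(a(n+1)^bc^n)\to1$, use that $\Pop(T_n)$ is an $L^p$-contraction for every $p\in[1,\infty]$, and bound $\norm{C_N-C'_N}_p$ by the Ces\`aro average of $\abs{1-\gamma_n}$, which tends to zero; the case $p=\infty$ then settles the a.e.\ statement. Your explicit handling of the finitely many indices with $T_n=0$ is a harmless refinement the paper leaves implicit.
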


\begin{proof}If $\lambda(\hat T_n)=\gamma_n$, $\gamma_n\to 1$, then
we have
\begin{multline*}
\norm{C_N-C_N'}_p=\Biggl\|\frac1N\sum_{n=0}^{N-1}(\Pop(T_n)-\hat T_n)\Biggr\|_p=
\Biggl\|\frac1N\sum_{n=0}^{N-1}\Pop(T_n)(1-\gamma_n)\Biggr\|_p\le{}\\
{}\le\frac1N\sum_{n=0}^{N-1}\|\Pop(T_n)\|_p\cdot |1-\gamma_n|\le
\frac1N\sum_{n=0}^{N-1}|1-\gamma_n|,
\end{multline*}
and the latter is the Ces\`aro sum of $x_n=|1-\gamma_n|$, which tends to zero.
Thus the difference $C_N-C_N'$ tends to zero in operator norm in any
$L^p(X,\nu)$, $p\in[1,\infty]$.
\end{proof}

Now let ${*}=4$. If one of the sequences $\{F_n\}$, $\{G_n\}$ has only
finitely many nonzero terms, we can use the lemma's statement
for $\{H^{(1)}_n\}_n$.
Otherwise take the constants $a_F,b_F,c_F$, $a_G,b_G,c_G$ same as before and
introduce operators $\hat F_n$, $\hat G_n$, $\hat H^{(4)}_n$ in the same way as
in Claim~\ref{Clm:ApprNorm}.

From the previous section of the proof
one can see that $\hat H^{(4)}_n$ is
either $\frac{a_F}{a_F+a_G}\hat F_n+\frac{a_G}{a_F+a_G}\hat G_n$
(if $c_F=c_G$ and $b_F=b_G$), or $\hat F_n+\eps_n\hat G_n$ with $\eps_n\to 0$
(if $c_G<c_F$, or if $c_G=c_F$ and $b_G<b_F$), or $\eps_n\hat F_n+\hat G_n$
(in symmetric cases). The convergence of Ces\`aro sums of~$\hat H^{(4)}_n$
in the first case is obvious, in the two latter cases
the term $\eps_n\hat F_n$ (or $\eps_n\hat G_n$) tends to zero in operator norm:
\begin{equation*}
\norm{\eps_n\hat F_n}\le \eps_n (\norm{F_n}/a_Fn^{b_F}c_F^n)\to 0\cdot 1,
\end{equation*}
and so does the sequence of its Ces\`aro averages.

Finally, suppose ${*}=5$. As usual, the proof is clear
if $\{F_n\}$ or $\{G_n\}$ contains finitely many nonzero terms,
otherwise let $a_{F,G,H}$, $b_{F,G,H}$, $c_{F,G,H}$ be the coefficients
in the regularity condition respectively for $\{\lambda(F_n)\}$, $\{\lambda(G_n)\}$,
$\{\lambda(H^{(5)}_n)\}$. Similarly to the case ${*}=4$, we'll prove
convergence for~the~sequence
\begin{equation*}
\bigl\{\frac1N\sum_{n=0}^{N-1}\hat H^{(5)}_n(\varphi)\bigr\}_N.
\end{equation*}

There are three cases, $c_F>c_G$, $c_F<c_G$, and $c_F=c_G$.
Suppose the first one. Then $c_H=c_F$, $b_H=b_F$, and for
any $\varphi\in L^p(X,\nu)$ we have
\begin{multline}\label{eq:H5case1}
C_N(\varphi)=\frac1N\sum_{n=0}^{N-1}\hat H^{(5)}_n(\varphi)=
\frac1N\sum_{k+m<N}\frac{a_F}{a_H}\hat F_k
\Bigl(\frac{G_m(\varphi)}{c_F^m}\Bigr)\cdot\Bigl(\frac {k+1}{k+m+1}\Bigr)^{b_F}={}\\
\sum_{m=0}^{N-1}\underbrace{\frac1N\sum_{k<N-m}
\frac{a_F}{a_H}\hat F_k
\Bigl(\frac{G_m(\varphi)}{c_F^m}\Bigr)\cdot\Bigl(\frac {k+1}{k+m+1}\Bigr)^{b_F}}_{S_{m,N}(\varphi)}.
\end{multline}
Let $A$ be chosen in such a way that $\|\hat F_k\|\le A$ for all $k$.
Then we have
\begin{multline*}
\norm{S_{m,N}(\varphi)}_p\le
\frac1N\sum_{k<N-m}\frac{a_F}{a_H}\norm{\hat F_k}_p
\frac{\lambda(G_m)}{c_F^m}\norm{\varphi}_m \Bigl(\frac {k+1}{k+m+1}\Bigr)^{b_F}\le{}\\
\frac1N\sum_{k<N-m}\frac{a_F}{a_H}\cdot A\cdot \frac{\lambda(G_m)}{c_F^m}
\cdot \norm{\varphi}_m\cdot 1\le
\frac{Aa_F\norm{\varphi}_p}{a_H}\cdot \frac{\lambda(G_m)}{c_F^m}.
\end{multline*}
Since $\sum_{m=0}^\infty \lambda(G_m)c_F^{-m}<\infty$, we can choose $m_0$
such that
\begin{equation}\label{eq:H5case1m0}
\sum_{m>m_0}\lambda(G_m)c_F^{-m}<\eps\cdot\frac{a_H}{a_F\norm{\varphi}_p}.
\end{equation}
Then we have
\begin{equation}\label{eq:H5case1estA}
\biggl\|\sum_{m>m_0}S_{m,N}(\varphi)\biggr\|_p\le A\eps.
\end{equation}
Further, let us find the limit of $S_{m,N}(\varphi)$ as $N\to\infty$.
Denote
\begin{equation*}
\psi_m=\frac{a_F}{a_H}\frac{G_m(\varphi)}{c_F^m},
\end{equation*}
then
\begin{multline}\label{eq:SmN}
S_{m,N}(\varphi)=\frac{N-m}{N}\biggl(
\frac1{N-m}\sum_{k<N-m}\hat F_k(\psi_m)-{}\\
-\frac1{N-m}\sum_{k<N-m}\hat F_k(\psi_m)\Bigl[1-\Bigl(\frac {k+1}{k+m+1}\Bigr)^{b_F}\Bigr]
\biggr).
\end{multline}
Due to regularity of the sequence $\{F_k\}$, the first term in parentheses
tends in $L^p$ or a.~e.\ to a function, which will be denoted as $F^0(\psi_m)$.
Note also that the equality \begin{equation*}
F^0(\theta)=\lim_{n\to\infty}\frac1N\sum_{k=0}^{N-1} \Pop(F_k)(\theta)=
\lim_{n\to\infty}\frac1N\sum_{k=0}^{N-1} \hat F_k(\theta)
\end{equation*}
defines a linear operator~$F^0\in\Bplus$, with $\lambda(F^0)=1$.

The second term in parentheses in \eqref{eq:SmN} is the Ces\`aro average
for the sequence
\begin{equation*}
\theta_{m,k}=\hat F_k(\psi_m)\Bigl[1-\nobreak\Bigl(\frac {k+1}{k+m+1}\Bigr)^{b_F}\Bigr],
\end{equation*}
which tends to zero in $L^p(X,\nu)$, $p\in[1,\infty]$, as $k\to\infty$.
Thus $S_{m,N}\to\nobreak F^0(\psi_m)$  in $L^p$ or a.~e.
In particular, there exists $N_m$ such that for any $N>N_m$ we~have
\begin{equation}\label{eq:H5case1estB}
\norm{S_{m,N}(\varphi)-F^0(\psi_m)}_p\le \frac{\eps}{m_0+1}.
\end{equation}
Similarly, for $\varphi\in L^\infty(X,\nu)$, for almost all~$x$ there exists $N_m(x)$
such that for~any~$N>N_m(x)$ we have
\begin{equation}\tag{\ref{eq:H5case1estB}${}'$}\label{eq:H5case1estB'}
\abs{S_{m,N}(\varphi)(x)-F^0(\psi_m)(x)}\le \frac{\eps}{m_0+1}.
\end{equation}
Note also that \eqref{eq:H5case1m0} yields
\begin{equation*}
\biggl\|\sum_{m>m_0}\psi_m\biggr\|_p\le
\sum_{m>m_0}\frac{a_F}{a_H}\norm{\varphi}_p\frac{\lambda(G_m)}{c_F^m}<\eps,
\end{equation*}
whence, noting that $\norm{F^0}_p\le1$, we obtain
\begin{equation}\label{eq:H5case1estC}
\Biggl\|\sum_{m=0}^{m_0} F^0(\psi_m)-F^0\biggl(\frac{a_F}{a_H}\sum_{m=0}^\infty
\frac{G_m(\varphi)}{c_F^m}\biggr)\Biggr\|_p<\eps.
\end{equation}
Now we can see that
\begin{align*}
C_N(\varphi)-F^0\biggl(\frac{a_F}{a_H}\sum_{m=0}^\infty
\frac{G_m(\varphi)}{c_F^m}\biggr)={}
&\sum_{m>m_0}S_{m,N}(\varphi)+{}\\
&\sum_{m=0}^{m_0}(S_{m,N}(\varphi)-F^0(\psi_m))+{}\\
&\biggl(\sum_{m=0}^{m_0} F^0(\psi_m)-F^0\biggl(\frac{a_F}{a_H}\sum_{m=0}^\infty
\frac{G_m(\varphi)}{c_F^m}\biggr)\biggr),
\end{align*}
and, if $N>\max(N_0,\dots,N_{m_0})$, the estimates \eqref{eq:H5case1estA},
\eqref{eq:H5case1estB}, \eqref{eq:H5case1estC} give us the~inequality
\begin{equation*}
\biggl\|C_N(\varphi)-F^0\biggl(\frac{a_F}{a_H}\sum_{m=0}^\infty
\frac{G_m(\varphi)}{c_F^m}\biggr)\biggr\|<(2+A)\eps.
\end{equation*}
Similarly, if $N>\max(N_0(x),\dots,N_{m_0}(x))$ for $\varphi\in L^\infty(X,\nu)$
then \eqref{eq:H5case1estA},
\eqref{eq:H5case1estB'}, and \eqref{eq:H5case1estC} imply
\begin{equation*}
\biggl|C_N(\varphi)(x)-F^0\biggl(\frac{a_F}{a_H}\sum_{m=0}^\infty
\frac{G_m(\varphi)}{c_F^m}\biggr)(x)\biggr|<(2+A)\eps.
\end{equation*}

The second case $c_F<c_G$ is treated similarly. Namely,
the sum for $C_N(\varphi)$ is decomposed into the sums
\begin{equation}\label{eq:H5case2SkN}
S_{k,N}(\varphi)=\frac{a_G}{a_H}\frac1{c_G^k}F_k\biggl(
\frac1N\sum_{m<N-k}\hat G_m(\varphi)\Bigl(\frac{m+1}{k+m+1}\Bigr)^{b_G}
\biggr).
\end{equation}
The estimate of its norm for $k>k_0$ is the same, and the only
difference is in~the~proof of convergence of $S_{k,N}(\varphi)$ as $N\to\infty$:
the argument of $F_k$ in \eqref{eq:H5case2SkN} tends to $G^0(\varphi)$,
so $S_{k,N}$ tends (in $L^p$ or a.~e.) to
\begin{equation*}
\frac{a_G}{a_H}\frac{F_k(G^0(\varphi))}{c_G^k},
\end{equation*}
with Proposition~\ref{Prop:B+ae-conv} being used in case $\varphi\in L^\infty(X,\nu)$.

Now consider the third case $c_F=c_G=c$. Here
\begin{equation*}
\frac1N\sum_{n=0}^{N-1}\hat H^{(5)}_n=
\frac1N\sum_{k+m<N}\frac{1}{a_H(k+m)^{b_F+b_G+1}}\frac{F_k}{c^k}\frac{G_m}{c^m}
\end{equation*}
and the lemma follows from Proposition~\ref{Prop:Conv} for $X_n=F_nc^{-n}$,
$Y_n=G_n c^{-n}$.
\end{proof}

\begin{proposition}\label{Prop:Conv}Let $X_n,Y_n\in\Bplus$ be such that
\begingroup\renewcommand\labelenumi{\textup{\theenumi.}}
\begin{enumerate}
\item the sequences $\{\lambda(X_n)/(n+1)^u\}$ and $\{\lambda(Y_n)/(n+1)^v\}$
are bounded,
\item for any $\varphi\in L^p(X,\nu)$, $p\in[1,\infty)$,
the sequences
$\biggl\{\displaystyle\frac1N\sum\limits_{n=0}^{N-1}\frac{X_n(\varphi)}{(n+1)^u}\biggr\}$
and $\biggl\{\displaystyle\frac1N\sum\limits_{n=0}^{N-1}\frac{Y_n(\varphi)}{(n+1)^v}\biggr\}$
converge in~$L^p(X,\nu)$ as $N\to\infty$,
\vspace{-5pt}
\item for any $\varphi\in L^\infty(X,\nu)$
the sequences
$\biggl\{\displaystyle\frac1N\sum\limits_{n=0}^{N-1}\frac{X_n(\varphi)}{(n+1)^u}\biggr\}$
and\quad\strut\ $\biggl\{\displaystyle\frac1N\sum\limits_{n=0}^{N-1}\frac{Y_n(\varphi)}{(n+1)^v}\biggr\}$
converge almost everywhere as $N\to\infty$.
\end{enumerate}
Let $Z_n=\sum_{k+m=n}X_kY_m$, $w=u+v+1$. Then
\begin{enumerate}
\item for~any~$\varphi\in L^p(X,\nu)$, $p\in[1,\infty)$,
the sequence
$\biggl\{\displaystyle\frac1N\sum_{n=0}^{N-1}\frac{Z_n(\varphi)}{(n+1)^w}\biggr\}$
converges in $L^p(X,\nu)$,
\vspace{-5pt}
\item for any $\varphi\in L^\infty(X,\nu)$
the sequence $\biggl\{\displaystyle\frac1N\sum_{n=0}^{N-1}\frac{Z_n(\varphi)}{(n+1)^w}\biggr\}$
converges \mbox{almost} everywhere.
\end{enumerate}
\endgroup
\end{proposition}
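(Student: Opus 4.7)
The plan is to express $C_N(\varphi)$ as a doubly-indexed weighted sum and pass to the limit by two successive Abel summations---first over $m$, then over $k$. Set $\hat X_k := X_k/(k+1)^u$ and $\hat Y_m := Y_m/(m+1)^v$; by hypothesis~1 their $L^p$-operator norms are uniformly bounded (say by constants $A$, $B$), and by hypotheses 2--3 the Cesàro averages $\Phi^X_N(\varphi) := \frac1N\sum_{k<N}\hat X_k(\varphi)$ and $\Phi^Y_N(\varphi) := \frac1N\sum_{m<N}\hat Y_m(\varphi)$ converge in $L^p$ (and a.e.\ for $\varphi\in L^\infty$) to limit operators $X^0,Y^0\in\Bplus$. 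Rewriting the convolution gives
\[
C_N(\varphi) = \sum_{k=0}^{N-1}\hat X_k\!\left(\frac{1}{N}\sum_{m=0}^{N-1-k}g_k(m)\hat Y_m(\varphi)\right), \qquad g_k(m) := \frac{(k+1)^u(m+1)^v}{(k+m+1)^{u+v+1}}.
\]
The target limit is $B(u+1,v+1)\,X^0(Y^0(\varphi))$, the Beta prefactor appearing because $\frac1N\sum_{k+m<N}g_k(m)$ is a Riemann sum for $\iint_{x+y\le1,\,x,y\ge0}x^uy^v(x+y)^{-u-v-1}\,dx\,dy$, which equals $B(u+1,v+1)$ via the substitution $(x,y)=(rs,r(1-s))$ (Jacobian $r$).

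For the inner sum, apply Abel summation using partial sums $A^Y_m=(m+1)\Phi^Y_{m+1}(\varphi)$ and the telescoping identity
\[
g_k(M)(M+1)-\sum_{m=0}^{M-1}(g_k(m+1)-g_k(m))(m+1)=\sum_{m=0}^M g_k(m),
\]
together with $\Phi^Y_{m+1}(\varphi)=Y^0(\varphi)+\eps_{m+1}(\varphi)$ where $\eps_m(\varphi)\to0$. This yields the decomposition
\[
\frac1N\sum_{m=0}^{N-1-k}g_k(m)\hat Y_m(\varphi)=\tilde\omega_k^{(N)}\,Y^0(\varphi)+E_k^{(N)}(\varphi),
\]
where $\tilde\omega_k^{(N)}=\frac1N\sum_{m=0}^{N-1-k}g_k(m)$ and $E_k^{(N)}$ is linear in $\eps_m$. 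A scaling analysis ($k=xN$, $m=yN$) shows $\tilde\omega_k^{(N)}\approx G(k/N)/N$ uniformly in $k$, where $G(x):=x^u\int_0^{1-x}y^v(x+y)^{-u-v-1}\,dy$ is continuous on $[0,1]$ with $\int_0^1 G(x)\,dx=B(u+1,v+1)$.

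The main term of $C_N(\varphi)$ is $\sum_{k<N}\tilde\omega_k^{(N)}\hat X_k(\psi)$ with $\psi:=Y^0(\varphi)$, a weighted Cesàro sum of $\hat X_k(\psi)$ against the smooth density $G$. Approximating $G$ uniformly by step functions and invoking Cesàro convergence $\Phi^X_N(\psi)\to X^0(\psi)$ on each piece---equivalently, a second Abel summation in $k$---gives convergence of this main term to $\int_0^1 G(x)\,dx\cdot X^0(\psi)=B(u+1,v+1)\,X^0(Y^0(\varphi))$, in $L^p$ or a.e.\ as appropriate.

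The main technical obstacle is uniform control of the Abel errors $\sum_k\hat X_k(E_k^{(N)}(\varphi))$. Given $\eps>0$, choose $M_0$ so that $\norm{\eps_m(\varphi)}_p<\eps$ for $m\ge M_0$ (respectively $\abs{\eps_m(\varphi)(x)}<\eps$ for $m\ge M_0(x)$, a.e.), and split the $m$-sum in $E_k^{(N)}$ into a head ($m<M_0$) and a tail ($m\ge M_0$). The head contributes at most $O(M_0/N)$ (using $\norm{\hat X_k}_p\le A$, the crude bound $|g_k(m+1)-g_k(m)|(m+1)\le \mathrm{const}\cdot g_k(m)$, and boundedness of $\Phi^Y_m$), while the tail contributes at most $\eps\cdot\sum_k\tilde\omega_k^{(N)}\le\eps\cdot(B(u+1,v+1)+o(1))$. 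For the almost-everywhere statement with $\varphi\in L^\infty$, Proposition~\ref{Prop:B+ae-conv} is invoked to pass a.e.\ convergence $\eps_m(\varphi)\to0$ through the bounded operators $\hat X_k$; the uniformity in $k$ is obtained, as in the proof of Proposition~\ref{Prop:B+ae-conv} itself, by dominating with the monotone sequence $\sup_{m\ge n}\abs{\eps_m(\varphi)(x)}$ and applying monotone convergence.
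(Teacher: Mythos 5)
Your $L^p$ argument takes a genuinely different route from the paper's (the paper first centers the operators, writing $X_n=X_n^*+(n+1)^uX^0$, $Y_n=Y_n^*+(n+1)^vY^0$, expands $Z_n$ into four terms and shows the three starred terms have vanishing Ces\`aro averages; you Abel-sum the convolution directly, first in $m$, then in $k$), and for $L^p$ it is essentially workable. The almost-everywhere half, however, has a genuine gap. In the tail $m\ge M_0$ of the error term you must control $\abs{\hat X_k(\eps_m(\varphi))(x)}$, and the pointwise smallness $\abs{\eps_m(\varphi)(x)}<\eps$ for $m\ge M_0(x)$ does \emph{not} pass through $\hat X_k$: the value $\hat X_k(\eps_m(\varphi))(x)$ depends on $\eps_m(\varphi)$ away from $x$. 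Dominating by $\psi_r=\sup_{m\ge r}\abs{\eps_m(\varphi)}$ and invoking monotone convergence ``as in the proof of Proposition~\ref{Prop:B+ae-conv}'' only gives $\hat X_k(\psi_r)(x)\to0$ as $r\to\infty$ for each \emph{fixed} $k$; since $\norm{\psi_r}_\infty$ need not tend to zero, there is no bound on $\sup_k\hat X_k(\psi_r)(x)$ tending to zero in $r$, so the ``uniformity in $k$'' you assert is simply not available. The mechanism that actually works --- and the reason the paper's Claims~\ref{Clm:Mnto0} and~\ref{Clm:Wk} are so laborious, with their exceptional sets and case analysis in $k$ --- is to check that the total weight attached to $\hat X_k$ in the tail is $O(1/N)$ uniformly in $k$, so the tail is dominated by $\mathrm{const}\cdot\frac1N\sum_{k<N}\hat X_k(\psi_{M_0})(x)$, which converges a.e.\ to $\mathrm{const}\cdot X^0(\psi_{M_0})(x)$ by the hypothesis applied to $\psi_{M_0}\in L^\infty$; only then does one let $M_0\to\infty$, applying Proposition~\ref{Prop:B+ae-conv} to the \emph{single} operator $X^0$. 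This idea is absent from your write-up, and without it the a.e.\ statement is not proved.

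A secondary flaw: $G(x)=x^u\int_0^{1-x}y^v(x+y)^{-w}\,dy$ is \emph{not} continuous on $[0,1]$ when $u=0$; substituting $y=xt$ gives $G(x)=\int_0^{(1-x)/x}t^v(1+t)^{-w}\,dt$, which behaves like $\log(1/x)$ as $x\to0$ when $u=0$, and $u=0$ is the generic case in the application (Perron--Frobenius gives polynomial degree $b=0$ for a primitive block). Correspondingly $\tilde\omega_0^{(N)}\asymp\log N/N$, so the uniform approximation $\tilde\omega_k^{(N)}\approx G(k/N)/N$ fails near $k=0$. This is repairable, since $G$ is still integrable: cut off $k\le\delta N$, whose contribution is $O\bigl(\int_0^\delta G\bigr)$, and run the step-function argument on $[\delta,1]$; but as written the step is wrong. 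With these two points repaired, your scheme would yield a correct proof along a different line from the paper's.
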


\begin{proof}1. Let
\begin{equation}\label{eq:defX0}
X^0(\varphi)=\lim_{N\to\infty}\frac1N\sum_{n=0}^{N-1}\frac{X_n(\varphi)}{(n+1)^u},\quad
Y^0(\varphi)=\lim_{N\to\infty}\frac1N\sum_{n=0}^{N-1}\frac{Y_n(\varphi)}{(n+1)^v}.
\end{equation}
These operators belong to $\Bplus$. Indeed, the first two conditions
are obvious, and, to check the remaining two, one
can see that
\begin{gather*}
\norm{X^0(\varphi)}_p\le
\limsup_{N\to\infty} \frac1N\sum_{n=0}^{N-1}\frac{\norm{\lambda(X_n)\varphi}_p}{(n+1)^u}\le
\limsup_{N\to\infty} \frac1N\sum_{n=0}^{N-1}\frac{\lambda(X_n)}{(n+1)^u}\norm{\varphi}_p,\\
\abs{X^0(\varphi)(x)}\le
\limsup_{N\to\infty} \frac1N\sum_{n=0}^{N-1}\frac{\abs{X_n\varphi(x)}}{(n+1)^u)}\le
\limsup_{N\to\infty} \frac1N\sum_{n=0}^{N-1}\frac{\lambda(X_n)}{(n+1)^u}\norm{\varphi}_\infty,
\end{gather*}
and note that the sequence
\begin{equation*}
\biggl\{\frac1N\sum_{n=0}^{N-1}\frac{\lambda(X_n)}{(n+1)^u}\biggr\}_N
\end{equation*}
is bounded by the same bound as the sequence $\{\lambda(X_n)/(n+1)^u\}_n$.

\pagebreak[2]
\medskip

2. Now introduce
\begin{equation*}
X_n^*=X_n-(n+1)^uX^0,\qquad
Y_n^*=Y_n-(n+1)^vY^0.
\end{equation*}
These operators are bounded in any $L^p(X,\nu)$, $p\in[1,\infty]$, and
the norms
$\norm{X_n^*}_p/(n+1)^u$, $\norm{Y_n^*}_p/(n+1)^v$ are bounded uniformly
on $p\in[1,\infty]$ and~$n$ (indeed, these bounds are simply
twice the bounds for
$\norm{X_n}_p/(n+1)^u=\lambda(X_n)/(n+1)^u$,
$\norm{Y_n}_p/(n+1)^v=\lambda(Y_n)/(n+1)^v$).
This is an analogue of the~first condition of the proposition;
one can see that the second and
the~third conditions hold for $X_n^*$, $Y_n^*$ in place of $X_n$, $Y_n$.

Furthermore,
\begin{multline}\label{eq:Zn}
\frac{Z_n(\varphi)}{(n+1)^w}=\sum_{k+m=n}\frac{X_kY_m(\varphi)}{(k+m+1)^w}={}\\
\begin{aligned}
{}=&\sum_{k+m=n}\frac{X^*_kY^*_m(\varphi)}{(k+m+1)^w}-
\biggl(\sum_{k+m=n}\frac{(m+1)^vX^*_k}{(k+m+1)^w}\biggr)(Y^0(\varphi))-{}\\
&{}-X^0\biggl(\sum_{k+m=n}\frac{(k+1)^uY^*_m(\varphi)}{(k+m+1)^w}\biggr)+
\sum_{k+m=n}\frac{(k+1)^u(m+1)^v}{(k+m+1)^w}X^0Y^0(\varphi).
\end{aligned}
\end{multline}
To prove Proposition~\ref{Prop:Conv}, it is sufficient
to prove ($L^p$- and a.~e.-) convergence of Ces\`aro averages
for each term in~\eqref{eq:Zn}.

\medskip

3. For the last term in \eqref{eq:Zn} the proof is simple:
\begin{equation*}
\sum_{k+m=n}\frac{(k+1)^u(m+1)^v}{(k+m+1)^w}=
\Bigl(\frac{n+2}{n+1}\Bigr)^w\cdot
\biggl(\frac1{n+2}\sum_{j=1}^{n+1}\Bigl(\frac j{n+2}\Bigr)^u
\Bigl(1-\frac j{n+2}\Bigr)^v\biggr).
\end{equation*}
Here the first multiplier tends to $1$ and the second one is the
Riemann sum of $f(x)=x^u(1-x)^v$ with a partition of $[0,1]$ into
$n+2$ equal intervals, so it tends to the Euler integral $B(u+1,v+1)$.
Therefore,
the last term tends to~$B(u+1,v+1)X^0Y^0(\varphi)$ and so do its
Ces\`aro averages.

\medskip

4. To prove convergence of the second and the third terms in \eqref{eq:Zn},
it is sufficient
to prove that Ces\`aro averages of
\begin{equation}\label{eq:Zn23}
\sum_{k+m=n}\frac{(m+1)^vX^*_k(\varphi)}{(k+m+1)^w}\quad\text{and}\quad
\sum_{k+m=n}\frac{(k+1)^uY^*_m(\varphi)}{(k+m+1)^w}
\end{equation}
converge to zero in $L^p(X,\nu)$ for any $\varphi\in L^p(X,\nu)$, $p\in [1,\infty)$,
and a.~e.\ for any $\varphi\in L^\infty(X,\nu)$.
Indeed, for the second term we denote $\psi=Y^0(\varphi)$ and
for the third one we use either boundedness of the operator $X^0$
in $L^p(X,\nu)$ or Proposition~\ref{Prop:B+ae-conv}.

The expressions in \eqref{eq:Zn23} transform to another one when we
swap $X\leftrightarrow Y$, $u\leftrightarrow v$, and $k \leftrightarrow m$,
so we may deal only with the first of them.

Denote
\begin{equation*}
A_n=\frac1n\sum_{k=0}^{n-1}\frac{X_k^*}{(k+1)^u},\quad \varphi_n=A_n(\varphi).
\end{equation*}
By construction, $\varphi_n$ tends to~$0$ in
$L^p(X,\nu)$ for $\varphi\in L^p(X,\nu)$ and
almost everywhere for $\varphi\in L^\infty(X,\nu)$.
Further,
\begin{equation*}
X_n^*=(n+1)^u\bigl((n+1)A_{n+1}-nA_n\bigr),
\end{equation*}
thus
\begin{multline*}
C_N=\frac1N\sum_{n=0}^{N-1}\sum_{k+m=n}\frac{(m+1)^vX^*_k(\varphi)}{(k+m+1)^w}={}\\
\frac1N\sum_{k+m<N}\frac{(m+1)^v(k+1)^u}{(k+m+1)^w}
\bigl((k+1)\varphi_{k+1}-k\varphi_k\bigr),
\end{multline*}
and, rearranging the sum, we have
\begin{equation}\label{eq:Zn23a}
C_N=\sum_{k=1}^{N}
\frac kN\biggl(\sum_{m=0}^{N-k}\frac{(m+1)^vk^u}{(m+k)^w}-
\sum_{m=0}^{N-k-1}\frac{(m+1)^v(k+1)^u}{(m+k+1)^w}\biggr)\varphi_k.
\end{equation}
Now we'll use the following statement.

\begin{claim}\label{Clm:alpha_xi}Let $\alpha_{N,k}\in\mathbb R$, $\xi_k\in\Xi$,
where $\Xi$ is a normed space. Suppose that
\begingroup\renewcommand\labelenumi{\textup{\theenumi.}}
\begin{enumerate}
\item $\xi_k\to 0$ as $k\to\infty$,
\item for any fixed $N$, there are only finitely many $k$'s
with $\alpha_{N,k}\ne 0$,
\item for any fixed $k$, $\alpha_{N,k}\to 0$ as $N\to\infty$,
\item there is such $C$ that\/ $\sum_k\abs{\alpha_{N,k}}<C$ for any $N$.
\end{enumerate}
\endgroup
Then\/ $\sum_k\alpha_{N,k}\xi_k\to 0$ as $N\to\infty$.
\end{claim}

\begin{proof}[Proof of Claim~\ref{Clm:alpha_xi}]
Let $\norm{\xi_k}<R$ for any $k$.
Take any $\eps>0$ and choose $k_0$ in~such~a~way that $\norm{\xi_k}<\eps$
for $k>k_0$. Since
\begin{equation*}
\sum_{k\le k_0}\abs{\alpha_{N,k}}\to 0\quad\text{as}\quad N\to\infty,
\end{equation*}
we can choose $N_0$ such that for any $N>N_0$
\begin{equation*}
\sum_{k\le k_0}\abs{\alpha_{N,k}}<\eps.\pagebreak[2]
\end{equation*}
Therefore, for any $N>N_0$ we have
\begin{equation*}
\Bigl\|\sum_k\alpha_{N,k}\xi_k\Bigr\|\le
\sum_k\abs{\alpha_{N,k}}\norm{\xi_k}=
\sum_{k\le k_0}\abs{\alpha_{N,k}}\norm{\xi_k}+
\sum_{k> k_0}\abs{\alpha_{N,k}}\norm{\xi_k}\le
\eps R + C\eps,
\end{equation*}
and the claim is established.
\end{proof}

We apply Claim~\ref{Clm:alpha_xi} to~\eqref{eq:Zn23a}
either with $\xi_k=\varphi_k$, $\Xi=L^p(X,\nu)$ (if~$\varphi\in\nobreak L^p(X,\nu)$)
or with $\xi_k=\varphi_k(x)$, $\Xi=\mathbb R$
(if $\varphi\in L^\infty(X,\nu)$). Obviously, $\xi_k\to 0$, and we need
to check conditions on $\alpha_{N,k}$, where
\begin{equation}\label{eq:alphaNk}
\alpha_{N,k}=\frac kN\biggl[
\frac{(N-k+1)^vk^u}{N^w}+{}\!\sum_{m=0}^{N-k-1}(m+1)^v\biggl(\frac{k^u}{(m+k)^w}-
\frac{(k+1)^u}{(m+k+1)^w}\biggr)\biggr]
\end{equation}
for $k=1,\dots,N$, otherwise $\alpha_{N,k}=0$.
The value in round brackets is of~the~form $f(k)-f(k+1)$ for
$f(x)=x^u/(x+m)^w$,
so we apply the mean value theorem to it.

There are two cases: $u>0$ and $u=0$. In the first case,
\begin{multline*}
\biggl|\frac{k^u}{(m+k)^w}-\frac{(k+1)^u}{(m+k+1)^w}\biggr|=
|f'(x_m)|=\frac{x_m^{u-1}|um-(v+1)x_m|}{(x_m+m)^{w+1}}\le{}\\
{}\le\frac{(k+1)^{u-1}}{(m+k)^w}\frac{um+(v+1)x_m}{x_m+m}\le
\frac{(k+1)^{u-1}}{(m+k)^w}(u+v+1)
\end{multline*}
(here $x_m\in [k,k+1]$). Thus we have
\begin{multline*}
\abs{\alpha_{N,k}}\le
\frac{k^{u+1}(N-k+1)^v}{N^{w+1}}+
\frac{kw}{N}\sum_{m=0}^{N-k-1}\frac{(m+1)^v(k+1)^{u-1}}{(m+k)^w}\le{}\\
{}\le\frac1N+\frac{k(k+1)^{u-1}w}{N}
\sum_{m=0}^{N-k-1}\frac{1}{(m+k)^{u+1}}
\end{multline*}
The sum $\sum_{j=k}^\infty j^{-(u+1)}$ is estimated as
\begin{multline*}
\sum_{j=k}^\infty \frac1{j^{u+1}}=\frac1{k^{u+1}}+
\sum_{j=k+1}^\infty \frac1{j^{u+1}}\le{}\\
\frac1{k^{u+1}}+
\int_k^{+\infty}\frac{dx}{x^{u+1}}=
\frac1{k^{u+1}}+\frac1{uk^{u}}\le\Bigl(1+\frac1u\Bigr)\frac1{k^u}.
\end{multline*}
Continue estimation for $\abs{\alpha_{N,k}}$:
\begin{multline*}
\abs{\alpha_{N,k}}\le
\frac1N\biggl(1+\frac{w(1+u)}u\cdot\frac{k(k+1)^{u-1}}{k^u}\biggr)={}\\
\frac1N\biggl(1+\frac{w(1+u)}u\cdot\Bigl(\frac{k+1}k\Bigr)^{u-1}\biggr)\le
\frac1N\biggl(1+\frac{w(1+u)}u\cdot 2^{u-1}\biggr).
\end{multline*}
Hence $\alpha_{N,k}\to 0$ as $N\to\infty$ for any fixed~$k$ , and
\begin{equation*}
\sum_k\abs{\alpha_{N,k}}\le 1+\frac{w(1+u)}u\cdot 2^{u-1}.
\end{equation*}
Thus in the case $u>0$ all conditions of~Claim~\ref{Clm:alpha_xi} hold.

Now let $u=0$. Here
\begin{equation*}
\biggl|\frac1{(m+k)^w}-\frac1{(m+k+1)^w}\biggr|=
\frac{|-w|}{(x_m+m)^{w+1}}\le
\frac{w}{(m+k)^{w+1}},
\end{equation*}
and
\begin{multline*}
\abs{\alpha_{N,k}}\le
\frac1N+\frac kN\sum_{m=0}^{N-k-1} w\frac{(m+1)^v}{(k+m)^{v+2}}\le{}\\
\frac1N+\frac{kw}N\sum_{m=0}^{N-k-1}\frac 1{(k+m)^2}\le
\frac1N+\frac{kw}N\cdot \frac2k=\frac{1+2w}N,
\end{multline*}
hence $\alpha_{N,k}\to 0$ as $N\to\infty$ and $\sum_k\abs{\alpha_{N,k}}\le 1+2w$.

\medskip

5. It remains to consider the first term in~\eqref{eq:Zn}. Denote
\begin{equation*}
A_n=\frac1n\sum_{k=0}^{n-1}\frac{X_k^*}{(k+1)^u},\quad
B_n=\frac1n\sum_{k=0}^{n-1}\frac{Y_k^*}{(k+1)^v},
\end{equation*}
hence
\begin{equation*}
X_n^*=(n+1)^u\bigl((n+1)A_{n+1}-nA_n\bigr),\quad
Y_n^*=(n+1)^u\bigl((n+1)B_{n+1}-nB_n\bigr).
\end{equation*}
Therefore, this term equals
\begin{multline*}
\tilde C_N=\frac1N\sum_{n=0}^{N-1}\sum_{k+m=n}\frac{X^*_kY^*_m(\varphi)}{(k+m+1)^w}={}\\
\shoveleft{\phantom{\tilde C_N}=\frac1N\sum_{k+m\le N-1}
\frac{(k+1)^u(m+1)^v}{(k+m+1)^w}}\times{}\\
{}\times((k+1)A_{k+1}-kA_k)((m+1)B_{m+1}-mB_m)(\varphi).
\end{multline*}
Rearranging the terms we obtain\footnote{Here we use \emph{Iverson bracket notation}:
for any statement $\mathcal A$
\begin{equation*}[\mathcal A]=\begin{cases}1,&\mathcal A\text{ is true,}\\
0,&\mathcal A\text{ is false.}\end{cases}
\end{equation*}}
\begin{multline*}
\tilde C_N=
\frac1N\sum_{k,m\ge1}
\biggl(\frac{k^um^v}{(k+m-1)^w}[k+m\le N+1]\\
{}-\frac{(k+1)^um^v}{(k+m)^w}[k+m\le N]-
\frac{k^u(m+1)^v}{(k+m)^w}[k+m\le N]\\
{}+\frac{(k+1)^u(m+1)^v}{(k+m+1)^w}[k+m\le N-1]\biggr)kmA_kB_m(\varphi).
\end{multline*}
This sum $\tilde C_N$ is decomposed as $\tilde C_N=\tilde C_N^{(1)}+\tilde C_N^{(2)}$,
where
\begin{subequations}
\begin{multline}\label{eq:tildeC1}
\tilde C_N^{(1)}=
\frac1N\sum_{\substack{k,m\ge1\\k+m\le N}}
\biggl(\frac{k^um^v}{(k+m-1)^w}-
\frac{(k+1)^um^v}{(k+m)^w}-{}\\
-\frac{k^u(m+1)^v}{(k+m)^w}+
\frac{(k+1)^u(m+1)^v}{(k+m+1)^w}\biggr)kmA_kB_m(\varphi),
\end{multline}
\begin{multline}
\tilde C_N^{(2)}=
\frac1N\biggl(
\sum_{\substack{k,m\ge1\\k+m=N+1}}\frac{k^um^v}{(k+m-1)^w}kmA_kB_m(\varphi)-\\
\sum_{\substack{k,m\ge1\\k+m=N-1}}\frac{(k+1)^u(m+1)^v}{(k+m+1)^w}kmA_kB_m(\varphi)\biggr).
\end{multline}
\end{subequations}
We'll prove that both $\tilde C^{(1)}_N$ and $\tilde C^{(2)}_N$ tend
to zero in $L^p(X,\nu)$ for $\varphi\in L^p(X,\nu)$, $p\in [1,\infty)$,
or almost everywhere for $\varphi\in L^\infty(X,\nu)$.

Let us start with $\tilde C^{(1)}_N$. Denote $g(x,y)=x^uy^v/(x+y-1)^w$,
then the~expression in round brackets in \eqref{eq:tildeC1} equals
\begin{multline*}
\bigl(g(k,m)-g(k+1,m)\bigr)-\bigl(g(k,m+1)-g(k+1,m+1)\bigr){}\\
=-g'_y(k,\mu)+g'_y(k+1,\mu)=g''_{xy}(\varkappa,\mu),
\end{multline*}
where $\varkappa\in(k,k+1)$, $\mu\in(m,m+1)$.
(We apply the mean value theorem first to $h_1(y)=g(k,y)-g(k+1,y)$
and then to $h_2(x)=g'_y(x,\mu)$.)
One can see that
\begin{equation*}
\begin{aligned}
g''_{xy}(\varkappa,\mu)={}&uv\frac{\varkappa^{u-1}\mu^{v-1}}{(\varkappa+\mu-1)^w}-
vw\frac{\varkappa^u\mu^{v-1}}{(\varkappa+\mu-1)^{w+1}}-{}\\
&-vw\frac{\varkappa^{u-1}\mu^v}{(\varkappa+\mu-1)^{w+1}}+
w(w+1)\frac{\varkappa^u\mu^v}{(\varkappa+\mu-1)^{w+2}}.
\end{aligned}
\end{equation*}
As $\varkappa>k\ge1$, $\mu>m\ge 1$, we have $\varkappa,\mu\le \varkappa+\mu-1$,
so each fraction\footnote{We cannot use this estimate when exponent
$u-1$ (resp., $v-1$) is negative, but then $u$ (resp., $v$) equals zero,
and the estimate~\eqref{eq:gxy} is simply $0\le 0$ for this term.}
is not more than $1/(\varkappa+\mu-1)^3$, thus
\begin{multline}\label{eq:gxy}
\abs{g''_{xy}(\varkappa,\mu)}\le\frac {uv+vw+uw+w(w+1)}{(\varkappa+\mu-1)^3}\\
{}\le\frac {uv+vw+uw+w(w+1)}{(k+m-1)^3}=\frac {\Theta_{u,v}}{(k+m-1)^3}.
\end{multline}

Now we proceed to an estimation of $A_kB_m(\varphi)$.

\begin{claim}\label{Clm:Mnto0}\textup{1.} Let $M_n=\sup_{k+m=n} \norm{A_kB_m(\varphi)}_p$
for some $\varphi\in L^p(X,\nu)$, $p\in\nobreak[1,\infty)$. Then $M_n\to 0$.\\
\textup{2.} Let $M_n(x)=\sup_{k+m=n} \abs{(A_kB_m(\varphi))(x)}$ for some
$\varphi\in L^\infty(X,\nu)$. Then~$M_n(x)\to\nobreak 0$ for almost all $x\in X$.
\end{claim}

\begin{proof}1. Let $\norm{A_k}_p\le C$ for all $k$.
Denote $\varphi_m=B_m(\varphi)$. Since $\norm{\varphi_m}_p\to 0$,
for a given $\eps>0$ one can choose $m_0$ such
that $\norm{\varphi_m}_p<\eps$ for all $m\ge m_0$.
Then $\norm{A_{n-m}\varphi_m}_p\le C\eps$ for $m\ge m_0$, so
\begin{equation*}
M_n\le\max(\norm{A_n\varphi_0}_p,\norm{A_{n-1}\varphi_1}_p,\dots,
\norm{A_{n-m_0}\varphi_{m_0}}_p,C\eps).
\end{equation*}
Since $\norm{A_n\varphi_m}_p\to 0$ as $n\to\infty$ for any fixed $m$,
there are $N_0,\dots,N_{m_0}$ such that $\norm{A_{n-m}(\varphi_m)}_p\le\eps$
for $n>N_m$, $m=0,\dots, m_0$. Therefore if~$n\ge\nobreak N=\nobreak\max(N_0,\dots,N_{m_0})$,
then $M_n\le\max(\eps,C\eps)$.

2. Now let $\varphi\in L^\infty(X,\nu)$. Since $\varphi_m\aeto 0$,
if we denote
\begin{equation*}
\psi_r(x)=\max_{m\ge r}\abs{\varphi_m(x)},
\end{equation*}
then $\psi_r\aeto 0$. Note that $\psi_r(x)$ is nonnegative and nonincreasing sequence
for any $x\in X$.

The operators $A_k$ need not belong to $\Bplus$. But if we denote
\begin{equation*}
A^+_k(\theta)=\frac 1N\sum_{n=0}^{n-1}\frac{X_n(\theta)}{(n+1)^u},
\end{equation*}
then $A^+_k\in\Bplus$, and $A^+_k(\theta)\aeto X^0(\theta)$
for any $\theta\in L^\infty(X,\nu)$ (by definition of $X^0$,
see \eqref{eq:defX0}). It is also clear that $A_k=A^+_k-X^0$.

Now define the following ``exceptional sets'':
\begin{align*}
E^1&{}=\{x\mid X^0(\psi_r)(x)\xnto{\!r\to\infty\!}0\},\\
E^2_m&{}=\{x\mid A_k(\varphi_m)(x)\xnto{\!k\to\infty\!}0\},\\
E^3_r&{}=\{x\mid A_k(\psi_r)(x)\xnto{\!k\to\infty\!}0\},\\
E^4_k&{}=\{x\mid A_k(\varphi_m)(x)\xnto{\!m\to\infty\!}0\},
\end{align*}
Their measure is zero due to Proposition~\ref{Prop:B+ae-conv}
(for $E^1$, $E^4_k$)
and since $A_k(\theta)\aeto 0$ (for $E^2_m$, $E^3_r$).
Denote $E=E^1\cup\Bigl(\bigcup_m E^2_m\Bigr)\cup\Bigl(\bigcup_r E^3_r\Bigr)
\cup\Bigl(\bigcup_k E^4_k\Bigr)$
and prove that $M_n(x)\to 0$ for any $x\in X\setminus E$.

Indeed, take any $\eps>0$. Choose $r_0$ such that $X^0(\psi_{r_0})(x)\le\eps$
(here we use that $x\notin E^1$).
Note that since $X^0\in\Bplus$, $X^0(\psi_r)\ge0$ for any $r$
and $X^0(\psi_r)\le X^0(\psi_{r_0})\le\eps$ for any $r\ge r_0$.

Now choose $k_0$ such that $\abs{A_k(\psi_{r_0})(x)}<\eps$ for any $k>k_0$
($x\notin E^3_{r_0}$).
Then all possible $k$'s are divided into three classes, each class is estimated separately.

\noindent\textbf{Case 1. }Let $k=0,\dots,k_0$. Then, since $x\notin E^4_k$, there exists
$N^{(1)}_k$ such that $\abs{A_k(\varphi_{n-k})(x)}<\eps$ for any $n>N^{(1)}_k$.
Choose $N^{(1)}=\max(N^{(1)}_0,\dots,N^{(1)}_{k_0})$. Then for any $n>N^{(1)}$
\begin{equation*}
M^{(1)}_n(x)=\max_{\substack{k+m=n\\k\le k_0}}\abs{A_kB_m(\varphi)(x)})\le\eps.
\end{equation*}

\noindent\textbf{Case 2. }Let $k=k_0+1,\dots,n-r_0$. Then
\begin{multline*}
\abs{A_k(\varphi_{n-k})(x)}\le
\abs{A_k^+(\varphi_{n-k})(x)}+\abs{X^0(\varphi_{n-k})(x)}\le{}\\
{}\le A_k^+(\psi_{r_0})(x)+X^0(\psi_{r_0})(x)\le
2X^0(\psi_{r_0})(x)+\abs{A_k(\psi_{r_0})(x)}\le 2\eps+\eps=3\eps.
\end{multline*}
Thus,
\begin{equation*}
M^{(2)}_n(x)=\max_{\substack{k+m=n\\k_0<k\le n-r_0}}
\abs{A_kB_m(\varphi)(x)}\le3\eps.
\end{equation*}

\noindent\textbf{Case 3. }Let $k=n-r_0+1,\dots, n$. Then,
since $A_n(\varphi_m)(x)\xto{n\to\infty} 0$
for~any~$m=\nobreak0,\dots,\allowbreak r_0-\nobreak1$  (we use that $x\notin E^2_m$),
one can choose $N^{(3)}_m$ such that
$\abs{A_{n-m}(\varphi_m)(x)}<\eps$ for any $n>N^{(3)}_m$, $m=0,\dots,r_0-1$.
Thus, for any $n>N^{(3)}=\max(N^{(3)}_0,\dots, N^{(3)}_{r_0-1})$
\begin{equation*}
M^{(3)}_n(x)=\max_{\substack{k+m=n\\k>n-r_0}}\abs{A_kB_m(\varphi)(x)})\le\eps.
\end{equation*}

Putting these estimates together, we obtain
that
\begin{equation*}
M_n(x)=\max(M^{(1)}(x),M^{(2)}(x),M^{(3)}(x))\le3\eps
\end{equation*}
for $n>N=\max(N^{(1)},N^{(3)})$.\end{proof}

Combining \eqref{eq:gxy} with Claim~\ref{Clm:Mnto0}, we have
\begin{multline*}
\bigl\|\tilde C^{(1)}_N\bigr\|_p\le \frac 1N\sum_{\substack{k,m\ge1\\k+m\le N}}
\frac{\Theta_{u,v}km}{(k+m-1)^3}M_{k+m}\\{}\le
\frac 1N\sum_{\substack{k,m\ge1\\k+m\le N}} \frac{\Theta_{u,v}}{k+m-1}M_{k+m}=
\frac {\Theta_{u,v}}N\sum_{n=2}^N\sum_{\substack{k,m\ge1\\k+m=n}}\frac{M_n}{n-1}=
\frac {\Theta_{u,v}}N\sum_{n=2}^N M_n
\end{multline*}
and $\frac 1N\sum_{n=2}^N M_n\xto{N\to\infty} 0$ as
Ces\`aro averages of the sequence $\{M_n\}$, which converges to zero.
For a.~e.-convergence this proof also works after substitution of
$\abs{\tilde C^{(1)}_N(x)}$ for $\norm{\tilde C^{(1)}_N}$
and of $M_n(x)$ for $M_n$.

Now we estimate $\tilde C^{(2)}_N$.
\begin{multline}\label{eq:C2N}
\begin{aligned}
\tilde C^{(2)}_N=\frac 1N\biggl(&
\sum_{m=1}^N \tfrac{(N+1-m)^um^{v+1}}{N^w}(N+1-m)A_{N+1-m}B_m(\varphi)\\
&{}-\sum_{m=1}^{N-1} \tfrac{(N-m)^u(m+1)^vm}{N^w}(N-1-m)A_{N-1-m}B_m(\varphi)
\biggr)={}
\end{aligned}
\\
\begin{aligned}
{}={}&\frac1N
\sum_{m=1}^{N-1}
\Bigl[\tfrac{(N+1-m)^um^{v+1}}{N^w}-\tfrac{(N-m)^u(m+1)^vm}{N^w}\Bigr]
(N-1-m)A_{N-1-m}B_m(\varphi)+{}\\
&\begin{aligned}{}+\frac1N
\sum_{m=1}^{N-1}
\dfrac{(N+1-m)^um^{v+1}}{N^w}
\bigl[
&(N+1-m)A_{N+1-m}B_m(\varphi)\\
&\quad{}-(N-1-m)A_{N-1-m}B_m(\varphi)\bigr]+{}\end{aligned}
\\
&\begin{aligned}{}+\frac1{N^{u+1}}A_NB_1(\varphi)\end{aligned}\\
\end{aligned}
\end{multline}
Convergence of the last term is immediate. For the first term we apply
Claim~\ref{Clm:Mnto0}. Indeed, the expression in square brackets
is of the form $$m(f(m)-f(m+1)),$$ and the mean value theorem yields that
(here $\mu\in[m,m+1]$)
\begin{multline*}
\biggl|\frac{(N+1-m)^um^{v+1}}{N^w}-\frac{(N-m)^u(m+1)^vm}{N^w}\biggr|={}\\
{}=\frac m{N^w}\bigl|-u(N+1-\mu)^{u-1}\mu^v+v(N+1-\mu)^u\mu^{v-1}\bigr|\le{}\\
{}\le m\biggl(\frac{u(N+1-\mu)^{u-1}\mu^v}{N^w}+\frac{v(N+1-\mu)^u\mu^{v-1}}{N^w}\biggr)\le
m\frac{u+v}{N^2}\le\frac{u+v}N,
\end{multline*}
whence $L^p$-norm of the first term is bounded by
\begin{multline*}
\frac1N\sum_{m=1}^{N-1}\frac{u+v}{N}(N-1-m)\norm{A_{N-1-m}B_m(\varphi)}\le{}\\
{}\le\frac1N\sum_{m=1}^{N-1}(u+v)M_{N-1}\le (u+v)M_{N-1}
\end{multline*}
so it tends to zero. The same argument works for a.e.-convergence,
with $L^p$\nobreakdash-norm being replaced by absolute value of value at $x$
and $M_{N-1}$ being replaced by $M_{N-1}(x)$.

As for the second term in~\eqref{eq:C2N},
the coefficient $(N+1-m)^um^{v+1}/N^w$ is bounded by~$1$,
and the expression in square brackets equals
\begin{multline}\label{eq:ABdiff}
(N+1-m)A_{N+1-m}B_m(\varphi)-(N-1-m)A_{N-1-m}B_m(\varphi)={}\\
{}=\biggl(\frac{X^*_{N-m}}{(N+1-m)^u}+\frac{X^*_{N+1-m}}{(N+2-m)^u}\biggr)(B_m(\varphi)).
\end{multline}
Denote
\begin{equation*}
W_k=\frac{X_k}{(k+1)^u}+\frac{X_{k+1}}{(k+2)^u}
\end{equation*}
Then the sequence
\begin{equation*}
\frac 1N\sum_{k=0}^{N-1}W_k(\varphi)
\end{equation*}
tends to $2X^0(\varphi)=W^0(\varphi)$
in $L^p(X,\nu)$ (for $\varphi\in L^p(X,\nu)$) or a.~e.\ (for~$\varphi\in\nobreak L^\infty(X,\nu)$),
hence \eqref{eq:ABdiff} is equal to $(W_{N-m}-\nobreak W^0)(B_m(\varphi))$.

\begin{claim}\label{Clm:Wk}\textup{1.} If $\varphi\in L^p(X,\nu)$, then
\begin{equation*}
S_N=\frac1N\sum_{m=1}^N \bigl\|(W_{N-m}-W^0)(B_m(\varphi))\bigr\|_p
\end{equation*}
tends to zero.

\pagebreak[3]
\noindent \textup{2.} If $\varphi\in L^\infty(X,\nu)$, then
\begin{equation*}
S_N(x)=\frac1N\sum_{m=1}^N \bigl|(W_{N-m}-W^0)(B_m(\varphi))(x)\bigr|
\end{equation*}
tends to zero almost everywhere.
\end{claim}

The second term in~\eqref{eq:C2N} is estimated by $S_N$
(in $L^p$-norm) or by $S_N(x)$ (pointwise in absolute value).
Hence it remains to prove this claim to complete the proof of
Proposition~\ref{Prop:Conv}.

\begin{proof}[Proof of Claim~\ref{Clm:Wk}]
1. Let $C$ be a constant such that $\norm{W_k-W^0}_p\le C$ for all~$k$.
Then
\begin{equation*}
S_N\le\frac1N\sum_{m=1}^N
\norm{W_{N-m}-W^0}_p\cdot\norm{B_m(\varphi)}_p\le
\frac CN\sum_{m=1}^N\norm{B_m(\varphi)}_p
\end{equation*}
the latter is the Ces\`aro average (multiplied by $C$)
of the sequence $\norm{B_n(\varphi)}_p$, which tends to zero.

2. As in Claim~\ref{Clm:Mnto0}, denote $\varphi_m=B_m(\varphi)$,
$\psi_r(x)=\max_{m\le r}\abs{\varphi_m(x)}$.
Let constants $C$ and $R$ be such that $\norm{W_k-W^0}_\infty<C$ for all $k$
and $\norm{\varphi_m}_\infty\le R$ for all $m$.
Define the following ``exceptional sets''
\begin{align*}
E^1&{}=\{x\mid W^0(\psi_r)(x)\xnto{\!r\to\infty\!}0\},\\
E^2_r&{}=\Bigl\{x\Bigm|
\frac1N\sum_{k=0}^{N-1}(W_k-W^0)(\psi_r)(x)\xnto{\!k\to\infty\!}0\Bigr\}.
\end{align*}
and let $E=E^1\cup\Bigl(\bigcup_r E^2_r\Bigr)$.

Fix any $x\in X\setminus E$ and take any $\eps>0$. Choose $r_0$ such that
$W^0(\psi_{r_0})<\eps$.
Then
\begin{multline*}
S_N(x)=\frac1N\biggl(\sum_{m=1}^{r_0-1}+\sum_{m=r_0}^N\biggr)
\abs{(W_{N-m}-W^0)(\varphi_m)(x)}\le{}\displaybreak[1]\\
{}\le \frac{CR(r_0-1)}{N}+\frac1N\sum_{m=r_0}^N
\abs{W_{N-m}(\varphi_m)(x)}+\abs{W^0(\varphi_m)(x)}\le{}\displaybreak[2]\\
{}\le \frac{CR(r_0-1)}{N}+\frac1N\sum_{m=r_0}^N
\bigl(W_{N-m}(\psi_{r_0})(x)+W^0(\psi_{r_0})(x)\bigr)\le{}\displaybreak[2]\\
{}\le \frac{CR(r_0-1)}{N}+\frac1N\sum_{m=1}^N
\bigl(W_{N-m}(\psi_{r_0})(x)+W^0(\psi_{r_0})(x)\bigr)\le{}\\
\le\frac{CR(r_0-1)}{N}+2W^0(\psi_{r_0})(x)+\frac1N\sum_{k=0}^{N-1}(W_k-W^0)(\psi_{r_0})(x).
\end{multline*}
Here the first term tends to zero as $N\to\infty$, the second one is less
than $2\eps$,
and the last one also tends to zero (since $x\notin E^2_{r_0}$).
Hence for sufficiently large $N$ one has $S_N(x)\le 3\eps$.
\end{proof}

Therefore Proposition~\ref{Prop:Conv} is completely proven.
This completes the proofs of Lemma~\ref{Lem:OperPreConv},
Theorem~\ref{Thm:UVU*Avg}, and Theorem~\ref{Thm:MarkSemi}.
\end{proof}

\end{document}